\def\section{\@startsection{section}{1}%
\z@{1\linespacing\@plus\linespacing}{1\linespacing}%
{\bf\centering}}
\def\subsection{\@startsection{subsection}{0}%
\z@{\linespacing\@plus\linespacing}{\linespacing}%
{\bf}}
\def\subsubsection{\@startsection{subsubsection}{0}%
\z@{\linespacing\@plus\linespacing}{\linespacing}%
{\bf}}
\newtheorem{assumption}{Assumption}[section]
\newtheorem{definition}{Definition}[section]
\newtheorem{theorem}{Theorem}[section]
\newtheorem{proposition}{Proposition}[section]
\newtheorem{remark}{Remark}[section]
\newtheorem{lemma}{Lemma}[section]
\def\RR{\mathbb{R}}\def\R{\mathbb{R}}\def\rr{\mathbb{R}}
\def\NN{\mathbb{N}}\def\N{\mathbb{N}}\def\nn{\mathbb{N}}
\def\PP{\mathbb{P}}\def\pp{\mathbb{P}}
\def\E{\mathbb{E}}\def\ex{\mathbb{E}}
\def\al{\alpha}
\def\be{\beta}
\def\ga{\gamma}
\def\de{\delta}
\def\ep{\varepsilon}\def\e{\varepsilon}
\def\ph{\varphi}
\def\la{\lambda}
\def\sk{\smallskip}
\def\mk{\medskip}
\def\wt{\widetilde}
\def\cH{\mathcal{H}}
\def\rd{\mathrm{d}}
\def\cK{\mathcal{K}}
\def\cF{\mathcal{F}}
\def\deg{{\rm deg}}
\def\sgn{{\rm sgn}}
\def\Ran{{\rm Ran}}
\def\dimh{\dim_{\tiny  \sf H}}
\def\indiq{\mathbf{1}}
\def \( {\left(}
\def \) {\right)}
\newcommand{\abs}[1]{\left\vert#1\right\vert}
\newcommand{\bra}[1]{\left\lbrace#1\right\rbrace}
\providecommand{\pro}[1]{(#1_t)_{t \geq 0}}
\providecommand{\proo}[1]{(#1_t)_{t \in \R}}
\providecommand{\semi}[1]{\{#1_t: t \geq 0\}}
\DeclareMathOperator{\Dom}{Dom}
\DeclareMathOperator{\Spec}{Spec}
\begin{document}
\title[Multifractality of ground state-transformed jump processes]
{Multifractal properties of sample paths of ground state-transformed jump processes}
\author[J. L\H orinczi and X. Yang]{J\'ozsef L\H orinczi and Xiaochuan Yang}
\address{J\'ozsef L\H orinczi,
Department of Mathematical Sciences, Loughborough University \\
Loughborough LE11 3TU, United Kingdom}
\email{J.Lorinczi@lboro.ac.uk}

\address{Xiaochuan Yang, Department of Statistics and Probability \\
Michigan State University
\\ 619 Red Cedar Road, East Lansing, MI, USA}
\email{yangxi43@stt.msu.edu, xiaochuan.j.yang@gmail.com}

\thanks{\emph{Key-words}: jump processes, sample path properties, stochastic differential equations, Hausdorff
dimension, Feynman-Kac semigroups, non-local Schr\"odinger operators, ground states
 \\ \medskip
\noindent
2010 {\it MS Classification}: Primary 60J75, 60G17, 28A78; Secondary 47D08, 47G20 \\
\noindent
}

\begin{abstract}
We consider a class of L\'evy-type processes with unbounded coefficients, arising as Doob $h$-transforms of
Feynman-Kac type representations of non-local Schr\"odinger operators, where the function $h$ is chosen to
be the ground state of such an operator. First we show existence of a c\`adl\`ag version of the so-obtained
ground state-transformed processes. Next we prove that they satisfy a related stochastic differential equation
with jumps. Making use of this SDE, we then derive and prove the multifractal spectrum of local H\"older
exponents of sample paths of ground state-transformed processes.
\end{abstract}

\maketitle

\baselineskip 0.5 cm

\bigskip \medskip

\section{Introduction}
The purpose of this paper is to investigate the local H\"older continuity properties of sample paths of a class
of L\'evy-type processes. These processes are obtained through a Doob $h$-transform of random processes occurring
in the Feynman-Kac representation of non-local Schr\"odinger operators. Such processes, and the related operators
and non-local equations, are currently much used in a variety of applications, for instance, in models of
mathematical physics (anomalous diffusion in porous media, quantum optics etc), however, we will not be concerned
with applications in this paper and for a discussion we refer to \cite{KL16b}. Apart from a direct theoretical
relevance and applications, sample path regularity properties are also of practical interest in modelling and
numerical simulations.

Below we will consider $\R^d$-valued L\'evy processes $\pro X$, generated by operators of the form
\begin{equation}
\label{defL}
L f(x) = \sum_{i,j=1}^d a_{ij} \frac{\partial^2 f}{\partial x_j \partial x_i} (x)
+ \int_{\R^d} \left(f(x+z)-f(x)-1_{\{|z|<1\}}(z) \, z \cdot \nabla f(x)\right)\nu(z) dz, \quad x \in \R^d,
\end{equation}
with $f \in C_c^{\infty}(\R^d)$, and where the matrix $A=(a_{ij})_{i,j=1,...,d}$ describes the diffusion part, and
the L\'evy measure $\nu(\rd z) = \nu(z)\rd z$ describes the jump part. (For details see Section 2 below.) A landmark
example is the fractional Laplacian $L = -(-\Delta)^{\alpha/2}$, $0 < \alpha < 2$, giving rise to an isotropic
$\alpha$-stable process, which is a specific case of the class $L = -\Psi(-\Delta)$, where $\Psi$ is a Bernstein
function. Further cases of much interest include jump-diffusion processes obtained as the sum of a mutually independent
Brownian motion and an isotropic $\alpha$-stable process generated by $L = a\Delta -  b(-\Delta)^{\alpha/2}$, $a, b > 0$,
isotropic relativistic stable processes generated by $L = -(-\Delta + m^{2/\alpha})^{\alpha/2} + m$, $m > 0$, isotropic
geometric $\alpha$-stable processes generated by $L =- \log(1 + (-\Delta)^{\alpha/2})$, and many others.

Next we consider a suitable class of Borel functions $V: \R^d \to \R$ called potentials, and define the non-local
Schr\"odinger operator $H = -L +V$ and the related semigroup $\{e^{-tH}: t \geq 0\}$. Assuming that $V$ is a
Kato-class potential, we then have the Feynman-Kac type representation \cite{HIL12,LHB11}
\begin{equation}
\label{FKf}
\left(e^{-tH}f\right)(x) = \ex^x[e^{-\int_0^t V(X_s)ds}f(X_t)] =: T_tf(x), \quad f \in L^2(\R^d), \; x \in \R^d,
\; t \geq 0,
\end{equation}
where the expectation is taken with respect to the probability measure of the L\'evy process $\pro X$. The so obtained
Feynman-Kac semigroup $\semi T$ has many convenient properties, allowing a far reaching study of, for instance,
spectral properties of $H = -L+V$ or solutions of non-local equations of the type $\partial_t u = Hu$. However, it is
not conservative in the sense that $T_t \indiq_{\R^d} \neq \indiq_{\R^d}$, $t > 0$, therefore the L\'evy process $\pro X$
perturbed by the function $V$ is in general no longer a random process. Nevertheless, by a suitable Doob $h$-transform
one can change the measure under which it becomes a Markov process.

Suppose that $H$ has a non-empty discrete component in its spectrum, and let $\varphi_0$ be its unique eigenfunction
(called \emph{ground state}) corresponding to the lowest-lying eigenvalue, i.e., $H\varphi_0 = \lambda_0\varphi_0$ with
$\varphi_0 \in \Dom H$ and $\lambda_0 = \inf \Spec H$. Then the map $f \mapsto \varphi_0f$ defines a unitary transform
from $L^2(\R^d,\varphi_0^2dx)$ to $L^2(\R^d,dx)$. It can be shown, see Section 2 below for further details, that the
image $\widetilde H$ of $H-\lambda_0$ under this unitary map gives the negative of the infinitesimal generator $\wt L$
of a Markov process, and for suitable test functions we have
\begin{multline}
(\widetilde L f)(x) = \frac{1}{2} \sigma\nabla \cdot \sigma\nabla f(x) + \sigma\nabla\ln\ph_0(x)\cdot
\sigma\nabla f(x) + \int_{0<|z|\le 1} \frac{\ph_0(x+z)-\ph_0(x)}{\ph_0(x)} z\cdot \nabla f(x)\nu(z)\rd z \\
+ \int_{\R^d\setminus\{0\}}\big( f(x+z)-f(x)-z\cdot\nabla f(x)\indiq_{\bra{{|z|\le 1}}}
\big)\frac{\ph_0(x+z)}{\ph_0(x)} \nu(z)\rd z,
\label{eq: generator}
\end{multline}
where $\nu$ and $A = \sigma\sigma^T$ are as in \eqref{defL} above, and where we use the notation $\sigma\nabla \cdot
\sigma\nabla f(x) =\sum_{i,j=1}^d (\sigma\sigma^T)_{ij}\partial_{x_i} \partial_{x_j} f(x)$. We call the resulting process
a \emph{ground state-transformed process} (also called \emph{$P(\phi)_1$-process} following the terminology of B. Simon
\cite{S04}).

The ground state-transformed process is a L\'evy-type process resulting from the effect of $V$ giving rise to position-dependent
drift and jump components, having almost surely c\`adl\`ag paths. However, in contrast with many cases of L\'evy-type processes
studied in the literature, the coefficients of $\widetilde L$ are generally unbounded. It is known that pseudo-differential
operators $G$ defined by
$$
(Gf)(x) = -\int_{\R^d} e^{ix\cdot y} g(x,y)\widehat f(y)\rd y, \quad f \in C_{\rm c}^\infty(\R^d),
$$
where the hat means Fourier transform, give rise to L\'evy-type processes under suitable conditions on the symbol $g(x,y)$.
Whenever $C_{\rm c}^\infty(\R^d) \subset \Dom G$ and $G$ generates a Feller process, the Courr\`ege representation
$$
g(x,y) = g(x,0) - i b(x)\cdot y + \frac{1}{2} y \cdot A(x)y + \int_{\R^d\setminus\{0\}} \left(1 - e^{iz\cdot y} + iz \cdot y
 \indiq_{{\bra{|z|\leq 1}}} \right) \nu(x,\rd z)
$$
holds, where the coefficients $b(x)$, $A(x)$ and $\nu(x,\cdot)$ play the same role of drift vector, diffusion matrix, and
jump measure as for L\'evy processes, with the essential difference that they are now position dependent \cite{C64,J,BSW}.
Furthermore, whenever the condition
\begin{equation}
\sup_{x\in\R^d} |g(x,y)| \leq C (1 + |y|^2), \quad y \in \R^d,
\label{bddq}
\end{equation}
holds, with a constant $C > 0$, the symbol can be used to analyze various properties of the process generated by $G$
\cite{SS10,BSW}. It is also known, however, that \eqref{bddq} implies that all of the coefficients $b(x)$, $A(x)$,
$\nu(x, \cdot)$ are bounded \cite{S98}. Recently, there has been an increasing interest in working also with unbounded
coefficients, see \cite{H98,K10,B11,S16,K16} and \cite[Sect. 3.6]{BSW}. The results below on ground state-transformed processes
complement these efforts since our approach is not through an analysis of the symbol, and apart from a direct interest in this
context, our class of processes has an immediate relevance in the study of spectral properties of related self-adjoint operators
and model Hamiltonians as a bonus \cite{HIL13,LHB11}.

Our concern in the present paper is to study sample path regularity properties of ground state-transformed processes
obtained for a large class of operators $H$. The typical long-time behaviour of such processes has been established in
\cite{KL17}, which is useful also in characterizing the support of the related Gibbs path measures defined by the right hand
side of the Feynman-Kac formula (for perturbations of symmetric $\alpha$-stable processes see also \cite{KL12}). While the
asymptotic behaviour on the long run is driven by the large jumps, regularity at short range depends on the ultraviolet properties
of $H$ involving the small jumps. It is reasonable to expect that at least under sufficiently ``nice" potentials $V$ the regularity
of paths of a ground state-transformed process inherits the regularity of the underlying L\'evy process and it does not deteriorate.
However, since the drift generated by the perturbation may become rough, the challenge is to establish conditions on $V$ under which
path regularity is at least preserved. Results in \cite{BFJS10, Y15, Y16}, where $L = (-\Delta)^{s(x)}$ and $V \equiv 0$, i.e.,
stable-like processes generated by fractional Laplacians of variable order are considered, indicate that local behaviour may become
very complex, and instead of an almost sure rule it can be even dependent on the individual path.

To describe local path regularity, we study the multifractal spectrum of local H\"older exponents of paths.
Recall that given a locally bounded function $f: \rr\to \rr^d$, it is said to belong to the pointwise H\"older space $C^h(x_0)$
for $h>0$ and $x_0\in\rr$ whenever there exist constants $c, \de > 0$, and a polynomial $P$ of $\deg P < \lfloor h \rfloor$
such that for $x\in B(x_0,\de)$,
\begin{align*}
|f(x) - P(x-x_0)|\le c|x-x_0|^h.
\end{align*}
The H\"older exponent of $f$ at point $x_0$ is then defined by
\begin{align*}
H_f(x_0) = \sup\{h>0: f\in C^h(x_0) \}.
\end{align*}
Consider the set
$$
E_f(h)=\{x\in\rr : H_f(x)= h\}.
$$
The \emph{multifractal spectrum} of $f$ is the map
$$
D_f:  h\mapsto \dimh E_f(h),
$$
where $\dimh$ denotes Hausdorff dimension, with the convention that $\dimh \emptyset=-\infty$.

The multifractal spectrum of random processes has been studied by various authors. The above objects are now defined
pathwise. For Brownian motion, the H\"older exponent equals $1\over 2$ everywhere \cite{OT74}, giving
\begin{eqnarray*}
\quad
D_B(h) = \left\{
 \begin{array}{ll}
 1 & \mbox{if $h=\frac{1}{2}$} \\
 -\infty & \mbox{otherwise}
\end{array} \right.
\end{eqnarray*}
almost surely, in which case the multifractal reduces to a mono-fractal behaviour. For a general L\'evy process $\pro X$,
\begin{align}
X_t = bt +  {\sigma B_t} + \int_0^t \int_{|z|\le 1} z \wt N(\rd s, \rd z) + \int_0^t \int_{|z|> 1} z  N(\rd s, \rd z),
\label{levy}
\end{align}
where $b \in \rr^d$ is the drift term, {$\sigma\sigma^T$} is the $d\times d$ diffusion matrix, $N$ is a Poisson measure, and $\wt N$ is the
compensated Poisson measure in $\rr^d$ with intensity given by the L\'evy measure $\nu(\rd z)$, the behaviour relates with
the upper Blumenthal-Getoor index \cite{BG61} given by
\begin{align}\label{eq: BG index}
\be_\nu = \inf\Big\{ \ga \geq 0: \int_{|z|\le 1} |z|^\ga \nu(\rd z)<\infty  \Big\},
\end{align}
describing the growth rate of the L\'evy measure around zero. The integrability condition of L\'evy measures implies that
$\be_\nu \in [0,2]$. Jaffard \cite{J99} has proved the following for a L\'evy process with $\be_\nu\in(0,2)$. If
$\sigma\neq 0$, then
\begin{align}
\label{eq: spec with B}
D^1_X(h) =
\begin{cases} \be_\nu  h & \mbox{ if } h<1/2 \\
1 & \mbox{ if } h = 1/2 \\
-\infty & \mbox{ otherwise}
\end{cases}
\end{align}
almost surely, and if $\sigma = 0$, then
\begin{align}
\label{eq: spec pure jump}
D^2_X(h) =
\begin{cases}
\be_\nu  h & \mbox{ if } h\le 1/\be_\nu \\
-\infty & \mbox{ otherwise}
\end{cases}
\end{align}
almost surely. Balan\c ca \cite{B14} has shown that the same result holds also for $\be_\nu=2$. (An example of a
one-dimensional pure jump L\'evy process with $\be_\nu=2$ is one with intensity $\nu(z) =  1/(z^3|\log z|^a)$, $a>1$.)
Extensions to L\'evy fields and time-changed L\'evy processes can be found in \cite{DJ, BS07}.

We also note that there are many further fractal properties of jump processes addressed in the literature. We refer to
\cite{KX05,MX05,KX08,KX12} and the references therein, and for a review see \cite{X04}.

Our main results are as follows. First, in Theorem \ref{th:exphi1} we prove the existence and basic properties of ground
state-transformed processes in the generality considered in this paper. Next in Theorem \ref{uniqueness}, we derive a
stochastic differential equation with jumps related to $\widetilde L$, and show that the ground state-transformed
processes we consider are a weak solution. Using the SDE representation, in Theorem \ref{spectrum} we then obtain the
multifractal spectrum of local H\"older exponents of our class of processes. We find that whenever the process contains
a Brownian component, it has a sweeping effect, and the behaviour is described by \eqref{eq: spec with B}. For cases of
pure jump processes, there is a split in the behaviour according to Blumenthal-Getoor indices lower or higher than 1.
For values $\be_\nu\in[1,2]$ the behaviour is described by \eqref{eq: spec pure jump}, while for $\be_\nu\in(0,1)$ this
happens under an increased regularity of the ground state. In Section 3.3 we also discuss the necessity of this extra
regularity.

\section{Ground state-transformed jump processes}

\subsection{L\'evy processes and perturbations by potentials}
Let $\pro X$ be a rotationally symmetric L\'evy process with values in $\R^d$, $d \geq 1$, i.e., as given by (\ref{levy})
in which we set $b=0$. The probability measure of the process starting at $x \in \R^d$ will be denoted by $\PP^x$, and
expectation with respect to this measure by $\ex^x$. The process $\pro X$ is determined by its characteristic function
$$
\ex^0 \left[e^{i y \cdot X_t}\right] = e^{-t \psi(y)}, \quad y \in \R^d, \ t >0,
$$
with the characteristic exponent given by the L\'evy-Khintchin formula
\begin{align} \label{eq:Lchexp}
\psi(y) = \frac{1}{2} A y \cdot y + \int_{\R^d} (1-\cos(y \cdot z)) \nu(\rd z).
\end{align}
Here $A=(a_{ij})_{1\leq i,j \leq d}=\sigma\sigma^T$ is a symmetric non-negative definite matrix, and $\nu$ is a symmetric
L\'evy measure on $\R^d \backslash \left\{0\right\}$, i.e., $\int_{\R^d} (1 \wedge |z|^2) \nu(\rd z) < \infty$ and $\nu(E)=
\nu(-E)$, for every Borel set $E \subset \R^d \backslash \left\{0\right\}$, thus the L\'evy triplet of the process
is $(0,\frac{1}{2}A,\nu)$. We will assume throughout that the L\'evy measure in \eqref{eq:Lchexp} has infinite mass
and it is absolutely continuous with respect to Lebesgue measure, i.e., $\nu(\R^d \backslash \left\{0\right\})=\infty$
and $\nu(\rd x)=\nu(x)\rd x$, with density $\nu(x) > 0$.

The generator $L$ of the process $\pro X$ is determined by its symbol $\psi$ through
\begin{align}
\label{def:gene}
\widehat{L f}(y) = - \psi(y) \widehat{f}(y), \quad y \in \R^d, \; f \in \Dom(L),
\end{align}
with domain $\Dom(L)=\big\{f \in L^2(\R^d): \psi \widehat f \in L^2(\R^d) \big\}$. It is a negative, non-local,
self-adjoint operator with core $C_{\rm c}^\infty(\R^d)$, and it has the expression \eqref{defL} for $f \in
C_{\rm c}^\infty(\R^d)$.

Next consider the set of functions
\begin{align}
\label{eq:Katoclass}
\cK^X = \Big\{f: \R\to \R^d: \, \mbox{$f$ is Borel measurable and} \,
\lim_{t \downarrow 0} \sup_{x \in \R^d} \ex^x \Big[\int_0^t |f(X_s)| ds\Big] = 0\Big\}.
\end{align}
We say that the potential $V: \R^d \to \R$ belongs to \emph{$X$-Kato class}, i.e., associated with the L\'evy process
$\pro X$, whenever it satisfies
$$
\quad V_- \in \cK^X \quad \text{and} \quad V_+ \in \cK^X_{\rm loc}, \quad \text{with} \quad V_+ = \max\{V,0\}, \;
V_- = \min\{V,0\},
$$
where $V_+ \in \cK^X_{\rm loc}$ means that $V_+ 1_B \in \cK^X$, for all compact sets $B \subset \R^d$. It is straightforward
to see that $L^{\infty}_{\rm loc}(\rr^d) \subset \cK_{\rm loc}^X$, moreover, by stochastic continuity of $\pro X$ also
$\cK_{\rm loc}^X \subset L^1_{\rm loc}(\R^d)$. Note that $X$-Kato class potentials may have local singularities.

By standard arguments based on Khasminskii's Lemma, see \cite[Lem.3.37-3.38]{LHB11}, for an $X$-Kato class
potential $V$ it follows that there exist suitable constants $C_1(X,V), C_2(X,V) > 0$ such that
\begin{align}
\label{eq:khas}
\sup_{x \in \R^d} \ex^x\left[e^{-\int_0^t V(X_s)\rd s}\right] \leq \sup_{x \in \R^d}
\ex^x\left[e^{\int_0^t V_-(X_s)\rd s}\right] \leq C_1 e^{C_2t}, \quad t>0.
\end{align}
This implies that
$$
T_t f(x) = \ex^x\left[e^{-\int_0^t V(X_s) \rd s} f(X_t)\right], \quad f \in L^2(\R^d), \ t>0,
$$
are well-defined operators. Using the Markov property and stochastic continuity of $\pro X$, it can be shown that
$\{T_t: t\geq 0\}$ is a strongly continuous semigroup of symmetric operators on $L^2(\R^d)$, which we call the
\emph{Feynman-Kac semigroup} associated with the process $\pro X$ and potential $V$. In particular, by the
Hille-Yoshida theorem there exists a self-adjoint operator $H$, bounded from below, such that $e^{-t H} = T_t$, with
core $C_{\rm c}^\infty(\R^d)$. We
call the operator $H$ a \emph{non-local Schr\"odinger operator} whose kinetic term is the negative of the infinitesimal
generator $L$ of the process $\pro X$. Since any $X$-Kato class potential is relatively form bounded with respect to $-L$
with relative bound less than 1, we have
$$
H=-L +V,
$$
in form sense, and $V$ acts as a multiplication operator \cite[Ch. 3]{LHB11}. For instance, when $\pro X$ is a subordinate
Brownian motion, we have $L = \Psi(-\Delta)$, where $\Psi$ is the Laplace exponent of the corresponding subordinator (given
by a Bernstein function), see \cite{HIL12}.

We make the following standing assumption throughout this paper.
\begin{assumption}
\label{assump}
The self-adjoint operator $H = - L + V$ has a ground state, i.e., there exists an eigenfunction $\varphi_0 \in \Dom H
\subset L^2(\R^d)$ such that
\begin{equation}
H\varphi_0 = \lambda_0 \varphi_0, \quad \varphi_0 \not\equiv 0, \quad \lambda_0 = \inf \Spec H,
\label{gs}
\end{equation}
and is normalized by $\|\varphi_0\|_2 = 1$.
\end{assumption}
\begin{remark}
\rm{
By general results it follows, see \cite[Ch.3]{LHB11} and \cite{KL15,KL16}, that for the class of operators $L$
and $V$ that we consider, whenever a ground state $\varphi_0$ of $H$ does exist, it is unique, has a strictly
positive version (which will be chosen throughout below), and it is bounded and continuous, with a pointwise decay
to zero at infinity. From \eqref{eq:Lchexp}-\eqref{def:gene} we have that $\Spec (-L) = \Spec_{\rm ess} (-L) =
[0,\infty)$. It also follows by general arguments that whenever the potential is confining, i.e.,  $V(x) \to \infty$
as $|x| \to \infty$, the spectrum of $-L$ completely changes under the perturbation and the spectrum of $H$ becomes
purely discrete, consisting of isolated eigenvalues of finite multiplicities. Thus for confining potentials a ground
state (\ref{gs}) always exists. When the potential is decaying, i.e., $V(x) \to 0$ as $|x|\to \infty$, or it is
confining in one direction and decaying in another direction, the spectrum of $H$ may or may not contain a discrete
component, and the existence of a ground state depends on further details of $V$.
}
\end{remark}
\subsection{Existence and c\`adl\`ag property of ground state-transformed processes}

By using $\varphi_0 > 0$, we define the \emph{ground state transform} as the unitary map
$$
U: L^2(\R^d,\varphi_0^2\rd x) \to L^2(\R^d,\rd x), \quad f \mapsto \varphi_0 f.
$$
Also, we define the intrinsic Feynman-Kac semigroup
\begin{equation}
\label{IFK}
\widetilde{T}_t f(x) = \frac{e^{\lambda_0 t}}{\varphi_0(x)} T_t(\varphi_0 f)(x)
\end{equation}
associated with $\semi T$. Using the integral kernel $u(t,x,y)$ of $T_t$, we then have that
$\widetilde{T}_t f(x) = \int_{\R^d} \tilde u(t,x,y)f(y) \varphi_0^2(y)\rd y$, with the integral
kernel given by
\begin{equation}
\widetilde{u}(t,x,y) = \frac{e^{\lambda_0 t} u(t,x,y)} {\varphi_0(x)\varphi_0(y)},
\label{kerIFK}
\end{equation}
and infinitesimal generator $\wt L = -\wt H$, where
\begin{equation}
\label{tilH}
\widetilde H = U^{-1}(H-\lambda_0)U,
\end{equation}
with domain
$$
\Dom \widetilde H = \big\{f \in L^2(\R^d, \varphi_0^2\rd x): Uf \in \Dom H \big\}.
$$
A calculation gives the expression \eqref{eq: generator}, which holds at least for $C_{\rm c}^\infty$ functions.
The operators $\widetilde T_t = e^{t\wt L}$ are contractions and we have $\widetilde T_t 1_{\R^d} = 1_{\R^d}$ for
all $ t \geq 0$, thus $\semi {\widetilde T}$ is a Markov semigroup on $L^2(\R^d, \varphi_0^2\rd x)$.

The self-adjoint operator $\widetilde L$ generates a stationary strong Markov process, which we call \emph{ground
state-transformed (GST) process} and denote by $\pro{\widetilde X}$. GST processes have been constructed
first for Brownian motion perturbed by potentials, see \cite{S04,BL03} and \cite[Sects. 4.10.2, 4.11.9]{LHB11}
for further details and applications. However, due to the jumps in our case there are some essential
modifications, and we give a proof of the existence of a c\`adl\`ag version of GST jump processes in the
generality allowed by Assumption \ref{assump}.

Denote by $\Omega_{\rm r}$ the space of right continuous functions from $[0,\infty)$ to $\R^d$ with left
limits (i.e., c\`adl\`ag functions), and by $\Omega_{\rm l}$ the space of left continuous functions from
$[0,\infty)$ to $\R^d$ with right limits (i.e., c\`agl\`ad functions). Denote the corresponding Borel
$\sigma$-fields by $\mathcal B(\Omega_{\rm r})$ and $\mathcal B(\Omega_{\rm l})$, respectively. Also,
denote by $\Omega$ the space of c\`adl\`ag functions from $\R$ to $\R$, and its Borel $\sigma$-field by
$\mathcal B(\Omega)$.

\begin{theorem}
\label{th:exphi1}
Let $\pro X$ be a L\'evy process with generator $L$ as given by \eqref{eq:Lchexp}-\eqref{def:gene}, $V$ be
an $X$-Kato class potential, and suppose that $H=-L+V$ has a ground state
$\varphi_0$. For all $x \in \R^d$, there exists
a probability measure $\widetilde \PP^x$ on $(\Omega, {\mathcal B}(\Omega))$  and a random process $\proo
{\widetilde X}$ satisfying the following properties:
 \begin{itemize}
\item[(1)]
Let $-\infty < t_0 \leq t_1 \leq ... \leq t_n < \infty$ be an arbitrary division of the real line, for any
$n \in \N$. The initial distribution of the process is
$$
\widetilde \PP^x(\widetilde{X}_0 = x) = 1,
$$
and the finite dimensional distributions of $\widetilde \PP^x$ with respect to the stationary distribution
$\varphi_0^2 \rd x$ are given by
\begin{align}
\label{eq:fddist}
\int_{\R^d} \ex_{\widetilde \PP^x}\Big[\prod_{j=0}^n f_j(\widetilde{X}_{t_j})\Big] \varphi^2_0(x) \rd x =
\left(f_0,\: \widetilde{T}_{t_1-t_0}\: f_1 ...\: \widetilde{T}_{t_n-t_{n-1}}\: f_n \right)_
{L^2(\R^d, \varphi_0^2 \rd x)}
\end{align}
for all $f_0,f_n \in L^2(\R^d, \varphi_0^2\rd x)$, $f_j \in L^{\infty}(\R^d)$, $j=1,..., n-1$.
\item[(2)]
The finite dimensional distributions are time-shift invariant, i.e.,
$$
\int_{\R^d} \ex_{\widetilde \PP^x}\Big[\prod_{j=0}^n f_j(\widetilde{X}_{t_j})\Big] \varphi^2_0(x) \rd x =
\int_{\R^d} \ex_{\widetilde \PP^x}\Big[\prod_{j=0}^n f_j(\widetilde{X}_{t_j+s})\Big] \varphi^2_0(x) \rd x,
\quad s \in \R, \, n \in \N.
$$
\item[(3)]
$(\widetilde{X}_t)_{t \geq 0}$ and $(\widetilde{X}_t)_{t \leq 0}$ are independent, and $\widetilde X_{-t}
\stackrel{\rm d}{=} \widetilde X_t$, for all $t \in \R$.
\item[(4)]
Consider the filtrations $\left(\cF_t^{+}\right)_{t \geq 0} = \sigma\left(\widetilde{X}_s: 0 \leq s \leq t\right)$
and $\left(\cF_t^{-}\right)_{t \leq 0} = \sigma\left(\widetilde{X}_s: t \leq s \leq 0\right)$. Then
$(\widetilde{X}_t)_{t \geq 0}$ is a Markov process with respect to $\left(\cF_t^{+}\right)_{t \geq 0}$,
and $(\widetilde{X}_t)_{t \leq 0}$ is a Markov process with respect to $\left(\cF_t^{-}\right)_{t \leq 0}$.
\item[(5)]
The map $t \mapsto \widetilde X_t$ is $\widetilde \PP^x$-almost surely c\`adl\`ag.
\end{itemize}
Furthermore, we have for all $f,g\in L^{2}(\R^d,\varphi_0^2\rd x)$ the change-of-measure formula
\begin{equation}
(f, \widetilde T_t g)_{L^{2}(\R^d,\varphi_0^2\rd x)}= (f\varphi_0, e^{-t(H-\lambda_0 )}g\varphi_0)_{L^2(\R^d,\rd x)}=
\int_{\R^d}\ex_{\widetilde \PP^x}[f(\widetilde X_0) g(\widetilde X_t)]\varphi_0^2{(x)}\rd x, \quad t \geq 0.
\label{gibbs}
\end{equation}
\end{theorem}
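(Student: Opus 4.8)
The plan is to build $\proo{\widetilde X}$ first on the half-line $[0,\infty)$, realising it inside a Doob-type $h$-transform of the c\`adl\`ag L\'evy process $\pro X$ carrying the additional Feynman--Kac factor, then to extend it to $\R$ using the reversibility encoded in the self-adjointness of $\widetilde T_t$, and finally to read off properties (1)--(4) and \eqref{gibbs} from the semigroup identities \eqref{IFK}--\eqref{tilH} and the corresponding properties of $\pro X$. Realising the process inside a change of measure of the L\'evy process is what makes the c\`adl\`ag property automatic, circumventing a path-regularity criterion.

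Fix $x\in\R^d$. For each $T>0$ consider on the Skorokhod space $\Omega_{\rm r}$ the random variable
\begin{align*}
M_T = e^{\lambda_0 T}\,\frac{\varphi_0(X_T)}{\varphi_0(x)}\,e^{-\int_0^T V(X_s)\,\rd s},
\end{align*}
which is $\cF_T^+$-measurable, strictly positive, and satisfies $\ex^x[M_T]=1$: indeed $\ex^x[M_T]=\frac{e^{\lambda_0 T}}{\varphi_0(x)}\,T_T\varphi_0(x)$ by definition of $\{T_t\}$, while $T_T\varphi_0=e^{-\lambda_0 T}\varphi_0$ holds pointwise since $\varphi_0\in\Dom H$ is bounded and continuous and $T_T$ acts through its kernel, and finiteness of $\int_0^T V_-(X_s)\,\rd s$ a.s.\ is guaranteed by \eqref{eq:khas}. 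Using the Markov property and stochastic continuity of $\pro X$ together with $T_{T-S}\varphi_0=e^{-\lambda_0(T-S)}\varphi_0$ one checks $\ex^x[M_T\mid\cF_S^+]=M_S$ for $0\le S\le T$, so $(M_t)_{t\ge0}$ is a $\PP^x$-martingale and also a multiplicative functional, and the probability measures $\rd\widetilde\PP^{x,T}:=M_T\,\rd\PP^x$ restrict consistently to the filtration $(\cF_T^+)$. Since $\mathcal B(\Omega_{\rm r})=\bigvee_{T>0}\cF_T^+$ and the intrinsic semigroup is conservative, $\widetilde T_t\indiq_{\R^d}=\indiq_{\R^d}$, a Kolmogorov-type extension argument on the Skorokhod space yields a unique probability measure $\widetilde\PP^x_{\rm r}$ on $(\Omega_{\rm r},\mathcal B(\Omega_{\rm r}))$ with $\widetilde\PP^x_{\rm r}|_{\cF_T^+}$ of density $M_T$ against $\PP^x$; in particular $\widetilde\PP^x_{\rm r}$ is carried by c\`adl\`ag paths.

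The finite-dimensional distributions then follow by iterating the Markov property against $M_{t_n}$: for $0=t_0\le t_1\le\cdots\le t_n$, relation \eqref{IFK} applied $n$ times gives
\begin{align*}
\ex_{\widetilde\PP^x_{\rm r}}\Big[\prod_{j=0}^n f_j(\widetilde X_{t_j})\Big]
= f_0(x)\,\big(\widetilde T_{t_1}f_1\,\widetilde T_{t_2-t_1}f_2\cdots\widetilde T_{t_n-t_{n-1}}f_n\big)(x),
\end{align*}
where the right-hand side is the usual alternation of the operators $\widetilde T_{t_j-t_{j-1}}$ with multiplication by $f_j$; integrating in $x$ against $\varphi_0^2\,\rd x$ gives \eqref{eq:fddist}, and taking $f_1=\cdots=f_n\equiv1$ with $\widetilde T_t\indiq=\indiq$ yields $\widetilde\PP^x_{\rm r}(\widetilde X_0=x)=1$, which is (1). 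For (2) note that $\varphi_0^2\,\rd x$ is invariant under $\widetilde T_t$, since $\widetilde T_t$ is symmetric and Markovian on $L^2(\R^d,\varphi_0^2\,\rd x)$, so $\int\widetilde T_t f\,\varphi_0^2\,\rd x=\int f\,\varphi_0^2\,\rd x$; inserting this into \eqref{eq:fddist} absorbs the time shift. Part (4) for $(\widetilde X_t)_{t\ge0}$ follows from multiplicativity of $(M_t)$ along $(\cF_t^+)$, which transports the Markov property of $\pro X$ (a Doob-type $h$-transform argument), and is equally visible from the semigroup form of \eqref{eq:fddist}. To obtain the two-sided process take, under $\widetilde\PP^x_{\rm r}$, an independent copy $(\widetilde X'_t)_{t\ge0}$ of $(\widetilde X_t)_{t\ge0}$ and set $\widetilde X_{-t}:=\widetilde X'_{t-}$ for $t>0$; a short check of one-sided limits shows the glued trajectory lies in $\Omega$, and since $\widetilde X_0=x$ is deterministic the two halves are genuinely independent, while the absence of fixed times of discontinuity (inherited from $\pro X$ through the local absolute continuity) gives $\widetilde X_{-t}\stackrel{\rm d}{=}\widetilde X_t$. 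The self-adjointness of $\widetilde T_t$ on $L^2(\R^d,\varphi_0^2\,\rd x)$ is precisely what makes the reversed semigroup chain coming from the negative-time copy combine, after integration against $\varphi_0^2\,\rd x$, with the positive-time chain into the full product in \eqref{eq:fddist}, and it also gives the Markov property of $(\widetilde X_t)_{t\le0}$ with respect to $(\cF_t^-)$. Finally, in \eqref{gibbs} the first equality is immediate from $\widetilde T_t=U^{-1}e^{-t(H-\lambda_0)}U$ with $Uf=\varphi_0 f$ unitary, and the second is the $n=1$ instance of \eqref{eq:fddist}.

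The step I expect to be the genuine obstacle is the passage from the consistent family $\{\widetilde\PP^{x,T}\}_{T>0}$ on the one-sided filtration to a single probability measure on the full path space $(\Omega,\mathcal B(\Omega))$: one must rule out escape of mass as $T\to\infty$ (this is exactly where conservativeness $\widetilde T_t\indiq=\indiq$ enters) and check that the gluing at $t=0$ returns a bona fide element of $\Omega$. Everything else---the finite-dimensional distributions, stationarity, the Markov property, and \eqref{gibbs}---is bookkeeping resting on \eqref{IFK}--\eqref{tilH}, the Markov property of $\pro X$, and the symmetry of $\widetilde T_t$ on $L^2(\R^d,\varphi_0^2\,\rd x)$.
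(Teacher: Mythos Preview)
Your approach is correct and genuinely different from the paper's. The paper proceeds in five steps: (i) Kolmogorov extension from the finite-dimensional distributions \eqref{fidi1}--\eqref{fidi2} to a measure $P_\infty$ on the full product space $(\R^d)^{[0,\infty)}$; (ii) a Dynkin--Kinney stochastic-continuity estimate (Lemma~\ref{lm:sc}), obtained by rewriting the two-point function in terms of the underlying L\'evy process and bounding it via repeated Cauchy--Schwarz and Khasminskii's inequality \eqref{eq:khas}, to produce c\`adl\`ag and c\`agl\`ad versions; (iii) regular conditional probabilities to pin the starting point; (iv) a product construction $\Omega_{\rm r}\times\Omega_{\rm l}$ for the two-sided process; (v) shift invariance read off from the semigroup chain. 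Your route sidesteps (i)--(ii) entirely by realising $\widetilde\PP^x_{\rm r}$ as the projective limit of the locally-absolutely-continuous measures $M_T\,\rd\PP^x$ on $\cF_T^+$: since $\PP^x$ already lives on c\`adl\`ag paths, so does every $\widetilde\PP^{x,T}$, and no path-regularity criterion is needed. This is more conceptual---it exhibits the GST process as an explicit $h$-transform of the L\'evy process, which is also how the ground state SDE \eqref{sde} is most naturally read---whereas the paper's Dynkin--Kinney computation is more self-contained and avoids invoking a projective-limit theorem on Polish (Skorokhod) spaces, which is the extension step you correctly flag as the one requiring care. Two small corrections: $M_T$ need not be \emph{strictly} positive, since $\int_0^T V_+(X_s)\,\rd s$ may be $+\infty$ on a $\PP^x$-nonnull set when $V_+$ is only locally $X$-Kato, but $M_T\ge0$ with $\ex^x[M_T]=1$ suffices for the change of measure; and your gluing via $\widetilde X_{-t}=\widetilde X'_{t-}$ is equivalent to the paper's use of a c\`agl\`ad copy on $\Omega_{\rm l}$, since $t\mapsto\widetilde X'_{t-}$ is the c\`agl\`ad modification of $\widetilde X'$.
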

\noindent
The probability measure $\widetilde \PP^x$ is a Gibbs measure on the space of two-sided c\`adl\`ag paths, see a
discussion for stable processes in \cite[Sect. 5.3]{KL12}. For the remaining part of this section we present a
proof of this theorem.

Let $n \in \mathbb N$ be arbitrary, and consider any time division $0 \leq t_0 \leq t_1 \leq ... \leq t_n$. Define
the set function $P_{\{t_0, ..., t_n\}} : \times_{j=0}^n \, {\mathcal B}(\R^d ) \rightarrow \R$  by
\begin{align}
&\label{fidi1}
P_{\{t_0\}}(A_0) = \left(\indiq, \widetilde T_{t_0}  \indiq_{A_0} \right)_{L^2(\R^d,\varphi_0^2\rd x)} =
\left(\indiq, \indiq_{A_0} \right)_{L^2(\R^d,\varphi_0^2\rd x)} \\
\label{fidi2}
&P_{\{t_0, ..., t_n\}}(\times_{j=0}^n A_j)= \left(\indiq_{A_0}, \widetilde T_{t_1-t_0} \indiq_{A_1} ...
\widetilde T_{t_n-t_{n-1}} \indiq_{A_n} \right)_{L^2(\R^d,\varphi_0^2\rd x)}, \quad n \in \N,
\end{align}
with arbitrary Borel sets $A_0, ..., A_n \in {\mathcal B}(\R^d )$.

\medskip
\noindent
\emph{Step 1}:
First we obtain a probability measure on the set of all functions $[0,\infty) \to \R^d$  by a projective limit of the
prescribed finite dimensional distributions (\ref{fidi1})-(\ref{fidi2}), which is a standard step. Denote the set of
finite subsets of the positive semi-axis by ${\mathcal P}_f(\R^+) = \left\{\Lambda \subset [0,\infty) : \, |\Lambda|
< \infty\right\}$, where the bars denote cardinality of the set. It can be verified directly that the family of set
functions $(P_\Lambda)_{\Lambda \in {\mathcal P}_f(\R^+)}$ satisfies the consistency condition of the marginals
$$
P_{\{t_0, ..., t_{n+m}\}}\left((\times_{j=0}^n A_j) \times (\times_{j=n+1}^{n+m} \R^d )\right) =
P_{\{t_0, ..., t_n\}}(\times_{j=0}^n A_j), \quad n, m \in \N.
$$
Hence by the Kolmogorov extension theorem there exists a probability measure $P_\infty$ and a random process $\pro Z$
on the measurable space $\left((\R^d )^{[0,\infty)}, \sigma({\mathcal A})  \right)$, where
\begin{align}
\label{bherea}
\mathcal{A} = \left\{\omega:\R\rightarrow \R^d : \; \Ran \ \omega\big|_{\Lambda}\subset E, \, E\in ({\mathcal  B} (\R^d))^{|\Lambda|},
\, \Lambda \in {\mathcal P}_f(\R^+) \right\},
\end{align}
such that
\begin{align*}
P_{\{t_0\}}(A) = \ex_{P_\infty}[\indiq_A(Z_{t_0})] \quad \mbox{and} \quad
P_{\{t_0,...,t_n\}}\left(\times_{j=0}^n A_j \right) = \ex_{P_\infty}\Big[\prod_{j=0}^n \indiq_{A_j}(Z_{t_j})\Big],
\quad n \in \mathbb N,
\end{align*}
hold. Hence we have
\begin{align}
\label{eq:fdd1}
& \ex_{P_\infty}[f_0(Z_{t_0})] = \left(1, \widetilde T_{t_0}  f_0 \right)_{L^2(\R^d,\varphi_0^2\rd x)} =
\left(\indiq, f_0 \right)_{L^2(\R^d,\varphi_0^2\rd x)} \\
\label{eq:fdd2}
& \ex_{P_\infty}\Big[\prod_{j=0}^n f_j(Z_{t_j})\Big] = \left(f_0, \widetilde T_{t_1-t_0} f_1 ...
\widetilde T_{t_n-t_{n-1}} f_n \right)_{L^2(\R^d,\varphi_0^2\rd x)},
\end{align}
for $f_j \in L^{\infty}(\R^d )$, $j=1,...,n-1$, $f_0,f_n\in L^2(\R^d,\varphi_0^2\rd x)$, $0 \leq t_0 \leq t_1
\leq ... \leq t_n$, and all $n \in \mathbb N$.

\medskip
\noindent
\emph{Step $2$}:
Next we prove the existence of both a c\`adl\`ag and a c\`agl\`ad version of $\pro Z$. In this step we show that the
Dynkin-Kinney condition holds.  The proof relies on the time and space homogeneity of the L\'evy process $\pro X$.
\begin{lemma}
\label{lm:sc}
Let $T>0$ be arbitrary but fixed. Then for every $\varepsilon > 0$ we have $P_\infty(|Z_t-Z_s|>\varepsilon) \to 0$ as
$|t-s|\to 0$, uniformly in $s,t\in[0,T]$.
\end{lemma}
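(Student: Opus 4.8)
The plan is to establish the Dynkin--Kinney (stochastic continuity) condition directly from the finite-dimensional-distribution formula \eqref{eq:fdd2}, reducing the two-time probability $P_\infty(|Z_t - Z_s| > \varepsilon)$ to an $L^2(\varphi_0^2\,dx)$ inner product involving the intrinsic semigroup $\widetilde T_{|t-s|}$, and then controlling the latter using the corresponding bound for the original Feynman--Kac semigroup $T_{|t-s|}$ via the change-of-measure relation \eqref{IFK}. By \eqref{eq:fdd2} with $n=1$, for $s \le t$ and test functions one has $\ex_{P_\infty}[f(Z_s)g(Z_t)] = (f, \widetilde T_{t-s} g)_{L^2(\R^d,\varphi_0^2 dx)}$; approximating the indicator of the event $\{|Z_t - Z_s| > \varepsilon\}$ by such products (or, more cleanly, integrating a suitable function of the pair $(Z_s, Z_t)$), I would write
\begin{align*}
P_\infty(|Z_t - Z_s| > \varepsilon) = \int_{\R^d} \ex_{\widetilde\PP^x}\big[\indiq_{\{|\widetilde X_t - \widetilde X_s|>\varepsilon\}}\big]\varphi_0^2(x)\,dx = \int_{\R^d}\int_{\R^d} \indiq_{\{|y-x|>\varepsilon\}}\, \widetilde u(t-s,x,y)\,\varphi_0^2(y)\,dy\,\varphi_0^2(x)\,dx,
\end{align*}
using the kernel representation \eqref{kerIFK}. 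Substituting $\widetilde u(r,x,y) = e^{\lambda_0 r} u(r,x,y)/(\varphi_0(x)\varphi_0(y))$ turns this into $e^{\lambda_0 (t-s)} \int\int \indiq_{\{|y-x|>\varepsilon\}} u(t-s,x,y)\varphi_0(x)\varphi_0(y)\,dx\,dy$.

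Next I would exploit boundedness of $\varphi_0$ (part (2) of Assumption \ref{assump}): with $\|\varphi_0\|_\infty < \infty$ the last display is bounded above by $e^{\lambda_0(t-s)}\|\varphi_0\|_\infty^2 \int\int \indiq_{\{|y-x|>\varepsilon\}} u(t-s,x,y)\,dy\,dx$, but since the kernel $u$ of the (non-conservative) Feynman--Kac semigroup of a Kato-class potential is dominated by the kernel of the free L\'evy semigroup — indeed $u(r,x,y) \le \big(\sup_x \ex^x[e^{\int_0^r V_-(X_u)du}]\big)^{1/?}$-type estimates, or more directly $0 \le T_r f \le e^{C_2 r}\ex^\cdot[f(X_r)]$ pointwise for $f \ge 0$ by \eqref{eq:khas} — one gets $\int \indiq_{\{|y-x|>\varepsilon\}} u(r,x,y)\,dy \le C_1 e^{C_2 r}\,\PP^x(|X_r - x| > \varepsilon) = C_1 e^{C_2 r}\,\PP^0(|X_r| > \varepsilon)$. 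The last quantity tends to $0$ as $r \downarrow 0$ by stochastic continuity of the L\'evy process $\pro X$, and the convergence is uniform in the starting point by spatial homogeneity of $\pro X$; the prefactor $e^{\lambda_0 r}e^{C_2 r}$ stays bounded for $r \in [0,T]$. One subtlety: the crude bound $\|\varphi_0\|_\infty^2 \int\int \cdots dx\,dy$ is infinite because of the outer $\int dx$ over all of $\R^d$ — so I would instead keep one factor $\varphi_0(x)$ and use $\int_{\R^d}\varphi_0(x)^2\,dx = 1$ (normalization), bounding $e^{\lambda_0 r}\|\varphi_0\|_\infty \int_{\R^d}\varphi_0(x)^2 \PP^0(|X_r|>\varepsilon)\, C_1 e^{C_2 r}\,dx = C_1 e^{(\lambda_0 + C_2)r}\|\varphi_0\|_\infty\,\PP^0(|X_r|>\varepsilon)$, which is the clean estimate I want.

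The main obstacle I anticipate is the rigorous passage from the inner-product identity \eqref{eq:fdd2}, which a priori holds for $f \in L^2(\varphi_0^2 dx)$ and $g \in L^\infty$ (or vice versa), to the genuine two-dimensional distribution of the pair $(Z_s, Z_t)$ evaluated on the non-product set $\{(x,y): |y-x| > \varepsilon\}$, since this set is not of the form $A_0 \times A_1$. I would handle this by a monotone-class / $\pi$--$\lambda$ argument: the identity $\ex_{P_\infty}[\indiq_{A_0}(Z_s)\indiq_{A_1}(Z_t)] = \int_{A_0}\int_{A_1} \widetilde u(t-s,x,y)\varphi_0^2(y)\varphi_0^2(x)\,dy\,dx$ holds for all rectangles $A_0 \times A_1$, hence extends to all Borel subsets of $\R^d \times \R^d$ by uniqueness of the two-dimensional law, giving in particular the displayed formula for $P_\infty(|Z_t - Z_s|>\varepsilon)$. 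Once that identification is in place, the rest is the elementary domination chain above, and uniformity over $s,t \in [0,T]$ with $|t-s| \to 0$ follows since the bound depends on $s,t$ only through $r = t-s \in [0,T]$ and $\PP^0(|X_r|>\varepsilon) \to 0$ as $r \downarrow 0$.
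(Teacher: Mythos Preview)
Your strategy is the paper's: express $P_\infty(|Z_t - Z_s| > \varepsilon)$ via the Feynman--Kac representation and reduce to stochastic continuity of the underlying L\'evy process using Khasminskii's bound \eqref{eq:khas}. Your observation that the final estimate depends only on $r = |t-s|$, so that uniformity over $s,t \in [0,T]$ is automatic, is in fact cleaner than the paper's separate compactness/covering argument.

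Two steps in your execution need tightening, however. First, the pointwise domination ``$T_r f \le e^{C_2 r}\,\ex^\cdot[f(X_r)]$'' does \emph{not} follow from \eqref{eq:khas}: Khasminskii bounds the expectation $\sup_x \ex^x\big[e^{-\int_0^r V}\big]$, not the random variable $e^{-\int_0^r V}$ itself, and for a general Kato-class $V_-$ the latter is unbounded. What works is Cauchy--Schwarz,
\[
\ex^x\big[e^{-\int_0^r V}\indiq_{B_\varepsilon(x)^c}(X_r)\big] \le \big(\ex^x[e^{-2\int_0^r V}]\big)^{1/2}\,\PP^0(|X_r|>\varepsilon)^{1/2},
\]
which is exactly what the paper does (twice, in fact). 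Second, after bounding $\varphi_0(y) \le \|\varphi_0\|_\infty$ you are left with the factor $\varphi_0(x)$, not $\varphi_0(x)^2$, in the outer integral, and $\int_{\R^d}\varphi_0(x)\,dx$ need not be finite under Assumption \ref{assump} (only $\varphi_0\in L^2\cap L^\infty$ is assumed). The paper handles this by retaining the factor $(\ex^x[\varphi_0^2(X_{r})])^{1/2}$ and applying one more Cauchy--Schwarz in the $x$-integral against $\varphi_0(x)$, using $\int_{\R^d}\ex^x[\varphi_0^2(X_r)]\,dx = \|\varphi_0\|_2^2$ by translation invariance of the L\'evy semigroup. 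Alternatively, since $u(r,x,y)$ is symmetric in $x,y$, the elementary bound $\varphi_0(x)\varphi_0(y) \le \tfrac12\big(\varphi_0(x)^2 + \varphi_0(y)^2\big)$ directly yields the $\varphi_0(x)^2$ integrand you wrote. With these two fixes your argument is complete.
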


\begin{proof}
We write the right hand side of \eqref{eq:fdd2} in terms of $\pro X$, i.e.,
\begin{align}
\label{eq:fdddir}
\ex_{P_\infty}\Big[\prod_{j=0}^n f_j(Z_{t_j})\Big] = \int_{\R^d} \ex^x\Big[e^{-\int_0^{t_n} (V(X_s)-\lambda_0) \rd s}
\Big(\prod_{j=0}^n f_j(X_{t_j})\Big) \varphi_0 (X_{t_n})\Big] \varphi_0 (x) \rd x.
\end{align}
Let $0 \leq s < t
\leq T$, and denote by $B_\varepsilon(x)$ the ball of radius $\varepsilon$ centered in $x$. 
Notice that by \eqref{eq:fdd2} we have
$$
P_\infty(|Z_t-Z_s|>\varepsilon) = \left(\indiq, \widetilde T_{t-s} \indiq_{B_\varepsilon(0)^c} \right)_{L^2(\R^d,\varphi_0^2\rd x)}.
$$
Recall that $\ex^x$ is expectation with respect to the measure $\PP^x$ of the L\'evy process $\pro X$. By \eqref{eq:fdddir},
{the Markov property of $\pro X$, and the conservative property of the intrinsic semigroup $\semi{\wt T}$ on
$L^2(\RR^d, \ph_0^2\rd x)$}, we furthermore have
\begin{align*}
P_\infty(|Z_t-Z_s|>\varepsilon) =
\int_{\R^d} \ex^x\left[e^{-\int_0^{t-s} (V(X_r) - \lambda_0) \rd r}  \varphi_0 (X_{t-s})
\indiq_{B_\varepsilon(x)^c}(X_{t-s})\right] \varphi_0 (x) \rd x.
\end{align*}
Schwarz inequality gives
\begin{align}
&P_\infty(|Z_t-Z_s|>\varepsilon)\nonumber \\
& \label{stochcon}
\leq
\int_{\R^d} \left(\ex^x\left[\varphi_0^2 (X_{t-s})\right]\right)^{1/2}
\left(\ex^x\left[\indiq_{B_\varepsilon(x)^c}(X_{t-s})e^{-2\int_0^{t-s} (V(X_r) - \lambda_0) \rd r}\right]\right)^{1/2}
\varphi_0 (x)\rd x.
\end{align}
Using again Schwarz inequality, we have
\begin{eqnarray*}
\lefteqn{
\ex^x\left[\indiq_{B_\varepsilon(x)^c}(X_{t-s})e^{-2\int_0^{t-s} (V(X_r) - \lambda_0) \rd r}\right]}\\
&\quad \leq&
\left(\ex^x\left[\indiq_{B_\varepsilon(x)^c}(X_{t-s}) \right]\right)^{1/2}
\left( \ex^x\left[e^{-4\int_0^{t-s} (V(X_r) - \lambda_0) \rd r}\right]\right)^{1/2}
\leq C \,\PP(|X_{t-s}|>\varepsilon)^{1/2},
\end{eqnarray*}
where $C=\sup_{x\in\R^d} \left( \ex^x\left[e^{-4\int_0^{t-s} (V(X_r) - \lambda_0) \rd r}\right]\right)^{1/2}$.
Thus by \eqref{stochcon} and a repeated use of Schwarz inequality, we get
\begin{align*}
P_\infty(|Z_t-Z_s|>\varepsilon)
\leq
C^{1/2} \PP(|X_{t-s}|>\varepsilon)^{1/4}   \|\varphi_0\|_2^2,
\end{align*}
which goes to zero as $|t-s| \to 0$ by stochastic continuity of $\pro X$.

\enlargethispage{0.5cm}
To show uniform convergence, fix $\eta>0$. By stochastic continuity, for every $t$ there exists $\rho_t>0$ such
that $P_\infty(|Z_t-Z_s|\geq\frac{\varepsilon}{2})\leq \frac{\eta}{2}$ for $|s-t|<\rho_t$. Let $I_t=
(t-\frac{\rho_t}{2},t+\frac{\rho_t}{2})$. There is a finite covering $I_{t_j}$, $j=1,...,n$, such that
$\cup_{j=1}^n I_{t_j}\supset [0,T]$. Let $\rho = \min_{1\leq j\leq n}\rho_{t_j}$. If $|s-t|<\rho$ and
$s,t\in[0,T]$, then $t\in I_{t_j}$ for some $j$, hence $|s-t_j|<\rho_{t_j}$ and
$$
P_\infty(|Z_t-Z_s|>\varepsilon) \leq P_\infty(|Z_t-Z_{t_j}|>\varepsilon/2)+ P_\infty(|Z_s-Z_{t_j}|>\varepsilon/2)
<\eta.
$$
\end{proof}
Recall that $\Omega_{\rm r}$ and $\Omega_{\rm l}$ denote the c\`adl\`ag and c\`agl\`ad path spaces over $[0,\infty)$,
respectively. By Lemma \ref{lm:sc}, there exists a c\`adl\`ag version $\bar Z = \pro {\bar Z}$ of $\pro Z$ on the space
$((\R^d )^{[0,\infty)}, \sigma({\mathcal  A}), P_\infty)$. Denote the image measure of $P_\infty$ on
$(\Omega_{\rm r}, {\mathcal B}(\Omega_{\rm r}))$ by $Q_{\rm r}  = P_\infty \circ \bar Z^{-1}$. Let  $\pro {Y}$
be the coordinate process on $(\Omega_{\rm r} , {\mathcal B}(\Omega_{\rm r}), Q_{\rm r})$ such that $\bar Z_t
\stackrel{\rm d}{=} Y_t$. In terms of $\pro {Y}$, equalities \eqref{eq:fdd1}-\eqref{eq:fdd2} become
\begin{align}
& \label{mmaa}
\left(\indiq, \widetilde T_{t_0} f_0 \right)_{L^2(\R^d,\varphi_0^2\rd x)} =
\left(\indiq, f_0 \right)_{L^2(\R^d,\varphi_0^2\rd x)} = \ex_{Q_{\rm r}}[f_0(Y_{t_0})], \\
& \label{mma}
\left(f_0, \widetilde T_{t_1-t_0} f_1 ... f_{n-1}\widetilde T_{t_n-t_{n-1}} f_n \right)_{L^2(\R^d,\varphi_0^2\rd x)}
= \ex_{Q_{\rm r}}\Big[\prod_{j=0}^n f_j(Y_{t_n})\Big].
\end{align}
Similarly, a c\`agl\`ad version $\underline Z = \pro {\underline Z}$ of $\pro Z$ can be also constructed on the same
probability space $((\R^d )^{[0,\infty)}, \sigma({\mathcal  A}), P_\infty)$. Likewise, there exists a probability measure
$Q_{\rm l}$ on the c\`agl\`ad space $(\Omega_{\rm l}, \mathcal B(\Omega_{\rm l})$ such that the coordinate process
$\pro{Y}$ satisfies \eqref{mmaa}-\eqref{mma} with $Q_{\rm l}$, and $Q_{\rm l} = P_\infty\circ \underline Z^{-1}$
holds.

\medskip
\noindent
\emph{Step 3}:
We define the regular conditional probability measures $Q_{\rm r}^x (\,\cdot\,) = Q_{\rm r}(\,\cdot\,|Y_0=x)$
and $Q_{\rm l}^x (\,\cdot\,) = Q_{\rm l}(\,\cdot\,|Y_0=x)$ for $ x \in \R^d$ on $(\Omega_{\rm r}, \mathcal
B(\Omega_{\rm r}))$ and $(\Omega_{\rm l}, {\mathcal  B}(\Omega_{\rm l}))$, respectively. Since $Y_0$ is
distributed by $\varphi_0^2(x)\rd x$, we have $Q_{\rm r}(A)=\int_{\R^d}\ex_{Q_{\rm r}^x}[\indiq_A] \varphi_0^2(x)\rd x$ and
$Q_{\rm l}(A) = \int_{\R^d} \ex_{Q_{\rm l}^x}[\indiq_A] \varphi_0^2(x)\rd x$. Hence the process $(Y_t)_{t \geq 0}$
on $(\Omega_{\rm r}, {\mathcal  B}(\Omega_{\rm r}), Q_{\rm r}^x)$ satisfies
\begin{align}
\label{eq:fdd4}
& \left(\indiq, \widetilde T_{t_0}  f_0 \right)_{L^2(\R^d,\varphi_0^2\rd x)} =
\left(\indiq, f_0 \right)_{L^2(\R^d,\varphi_0^2\rd x)}
= \int_{\R^d}\ex_{Q_{\rm r}^x}[f_0(Y_{t_0})] \varphi_0^2(x)\rd x \\
\label{eq:fdd3}
& \left(f_0, \widetilde T_{t_1-t_0} f_1 ... f_{n-1}\widetilde T_{t_n-t_{n-1}} f_n \right)_{L^2(\R^d,\varphi_0^2\rd x)}
= \int_{\R^d}  \ex_{Q_{\rm r}^x}\Big[\prod_{j=0}^n f_j(Y_{t_j})\Big] \varphi_0^2(x)\rd x,
\end{align}
and the process $(Y_t)_{t \geq 0}$ on $(\Omega_{\rm l}, {\mathcal  B}(\Omega_{\rm l}), Q_{\rm l}^x)$ satisfies
\eqref{eq:fdd4}-\eqref{eq:fdd3} with $Q_{\rm l}^x$.

\begin{lemma}
The coordinate process $\pro {Y}$ is a Markov process on $(\Omega_{\rm r},{\mathcal B}(\Omega_{\rm r}), Q_{\rm r}^x)$
with respect to the natural filtration $\pro {{\mathcal F}}$. Similarly, the coordinate process $\pro {Y}$ is a
Markov process on the probability space $(\Omega_{\rm l}, {\mathcal  B}(\Omega_{\rm l}), Q_{\rm l}^x)$ with respect to the
natural filtration
$\pro {{\mathcal F}}$.
\end{lemma}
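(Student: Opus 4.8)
The plan is to read off the Markov property from the transition-semigroup structure already encoded in the finite-dimensional distributions \eqref{eq:fdd4}--\eqref{eq:fdd3}. Since $T_t$ has an integral kernel, formula \eqref{kerIFK} shows that $\widetilde T_t f(x)=\int_{\R^d}\tilde u(t,x,y)f(y)\varphi_0^2(y)\rd y$ with $\tilde u(t,x,y)=e^{\lambda_0 t}u(t,x,y)/(\varphi_0(x)\varphi_0(y))\ge 0$; the semigroup identity $\widetilde T_{s+t}=\widetilde T_s\widetilde T_t$ translates into the Chapman--Kolmogorov relation $\int_{\R^d}\tilde u(s,x,z)\tilde u(t,z,y)\varphi_0^2(z)\rd z=\tilde u(s+t,x,y)$, and conservativeness $\widetilde T_t\indiq_{\R^d}=\indiq_{\R^d}$ gives $\int_{\R^d}\tilde u(t,x,y)\varphi_0^2(y)\rd y=1$. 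Thus $\tilde u$ is a genuine time-homogeneous transition probability density with respect to the reference measure $\varphi_0^2\rd x$, and the problem reduces to the classical implication ``transition semigroup $\Rightarrow$ Markov process''.

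First I would describe the finite-dimensional distributions of $Q_{\rm r}^x$. Choosing $t_0=0$ in \eqref{eq:fdd3} with an arbitrary bounded Borel $f_0$ and using $Q_{\rm r}^x(Y_0=x)=1$, one identifies, for $\varphi_0^2\rd x$-a.e.\ $x$ and all $0=t_0\le t_1\le\cdots\le t_n$,
\[
\ex_{Q_{\rm r}^x}\Big[\prod_{j=1}^n f_j(Y_{t_j})\Big]=\int_{(\R^d)^n}\prod_{j=1}^n
\tilde u(t_j-t_{j-1},y_{j-1},y_j)\,f_j(y_j)\,\varphi_0^2(y_j)\,\rd y_1\cdots\rd y_n,\qquad y_0:=x.
\]
Conversely, the right-hand side defines, by Chapman--Kolmogorov and normalization, a consistent family of finite-dimensional laws, so the Kolmogorov extension theorem together with the Dynkin--Kinney estimate applied verbatim as in Lemma \ref{lm:sc} yields a c\`adl\`ag process whose push-forward to $(\Omega_{\rm r},\mathcal B(\Omega_{\rm r}))$ is a probability measure that integrates against $\varphi_0^2(x)\rd x$ to $Q_{\rm r}$ and is concentrated on $\{Y_0=x\}$. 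We take this as our version of $Q_{\rm r}^x$, so that the displayed formula holds for every $x\in\R^d$; the same construction with $\Omega_{\rm l},Q_{\rm l}$ produces $Q_{\rm l}^x$.

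Next I would verify the Markov property. Fix $x$, let $0\le t_0\le\cdots\le t_n=s\le t$ and let $f_0,\dots,f_n,g$ be bounded Borel functions. Evaluating the finite-dimensional distribution of $Q_{\rm r}^x$ at the times $t_0,\dots,t_n,t$ and carrying out the last integration --- the factor $\tilde u(t-s,y_n,\cdot)$ tested against $g$ being exactly $(\widetilde T_{t-s}g)(y_n)$ --- gives
\[
\ex_{Q_{\rm r}^x}\Big[g(Y_t)\prod_{j=0}^n f_j(Y_{t_j})\Big]
=\ex_{Q_{\rm r}^x}\Big[(\widetilde T_{t-s}g)(Y_s)\prod_{j=0}^n f_j(Y_{t_j})\Big].
\]
Since the functions $\prod_{j=0}^n f_j(Y_{t_j})$ with $t_j\in[0,s]$ form a multiplicative class generating $\mathcal F_s=\sigma(Y_r:0\le r\le s)$, a functional monotone class argument upgrades this to $\ex_{Q_{\rm r}^x}[g(Y_t)\mid\mathcal F_s]=(\widetilde T_{t-s}g)(Y_s)$ for all bounded Borel $g$ and all $0\le s\le t$, which is precisely the Markov property of $\pro Y$ on $(\Omega_{\rm r},\mathcal B(\Omega_{\rm r}),Q_{\rm r}^x)$ relative to $\pro{\mathcal F}$, with transition operators $\widetilde T_{t-s}$. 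The c\`agl\`ad statement follows in exactly the same way, \eqref{eq:fdd4}--\eqref{eq:fdd3} being available also for $Q_{\rm l}^x$.

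The one point that needs care is the second step: ensuring that the regular conditional probabilities can be realized so that the \emph{same} kernel $\tilde u$ governs the transitions from \emph{every} starting point $x$, and not merely from $\varphi_0^2\rd x$-almost every $x$. Routing through the explicit iterated-kernel construction (and the Dynkin--Kinney estimate of Lemma \ref{lm:sc}) disposes of this; everything else is routine.
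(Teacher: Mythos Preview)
Your proposal is correct and follows essentially the same route as the paper: identify the transition kernel from the intrinsic semigroup $\widetilde T_t$, verify via the semigroup law and conservativeness that it is a genuine Markov transition kernel, express the finite-dimensional distributions of $Q_{\rm r}^x$ as iterated integrals against this kernel, and read off the Markov property. The paper works with the kernel $q_t(x,A)=\widetilde T_t\indiq_A(x)$ rather than the density $\tilde u$, and it glosses over both the monotone-class step and the ``a.e.\ $x$ versus every $x$'' issue that you take care to address explicitly; your treatment is therefore more complete on these points, but the underlying argument is the same.
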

\begin{proof}
Let $q_t(x,A) = \widetilde T_t \indiq_A(x)$, for every $A \in {\mathcal  B}(\R^d )$, $x \in \R^d $ and $t\geq 0$. Clearly,
$q_t(x,A) = \ex_{Q_{\rm r}^x}[\indiq_A(Y_t)]$, and by \eqref{eq:fdd4}-\eqref{eq:fdd3} the finite dimensional
distributions of $(Y_t)_{t \geq 0}$ can be written as
\begin{align}
\label{eq:fdd5}
\ex_{Q_{\rm r}^x} \Big[\prod_{j=0}^n \indiq_{A_j} (Y_{t_j})\Big] =
\int_{\R^d} \prod_{j=0}^n \indiq_{A_j}(x_j) q_{t_j - t_{j-1}}(x_{j-1},\rd x_j),
\end{align}
with $t_0 = 0$ and $ x_0 = x$. By using the properties of the semigroup $\semi {\widetilde T}$, it is checked directly
that $q_t(x,A)$ is a probability transition kernel, thus $(Y_t)_{t \geq 0}$ is a Markov process with finite dimensional
distributions given by \eqref{eq:fdd5}. The second statement can be proven similarly.
\end{proof}

\noindent
\emph{Step 4}:
Next we construct a random process indexed by the whole real line $\R$. Consider $\widehat {\Omega}= \Omega_{\rm r}
\times \Omega_{\rm l}$, with product $\sigma$-field $\widehat {{{\mathcal F}}}={\mathcal  B}(\Omega_{\rm r} ) \times
{\mathcal  B}(\Omega_{\rm l})$ and product measure ${\widehat Q}^x = Q_{\rm r}^x\times Q_{\rm l}^x$. Define the
coordinate process $\pro{\widehat Y}$ by
$$
\widehat  Y_t(\omega)=
\begin{cases}
\omega_1(t)  & t\geq0\\
\omega_2(-t) & t<0
\end{cases}
$$
for $\omega =(\omega_1, \omega_2) \in \widehat {\Omega}$. This is then a random process $(\widehat Y_t)_{t \in \R}$
on $(\widehat {\Omega}, \widehat {{{\mathcal F}}},{\widehat Q}^x)$ such that ${\widehat Q}^x(\widehat Y_0 = x)=1$,
and $\R \ni t \mapsto \widehat Y_t(\omega)$ is c\`adl\`ag. It is direct to see that $\widehat Y_t$, $t \geq 0$, and
$\widehat Y_s$, $s \leq 0$, are independent, and $\widehat Y_t \stackrel {\rm d}{=} \widehat Y_{-t}$.

\medskip
\noindent
\emph{Step 5}:
Denote the image measure of ${\widehat Q}^x$ on $(\Omega_{\rm r}, {\mathcal B}(\Omega_{\rm r}))$ with respect to
$\widehat Y = (\widehat Y_t)_{t \in \R}$ by $\widetilde \PP^x = {\widehat Q}^x \circ \widehat Y^{-1}$. Let
$\widetilde X_t(\omega) = \omega(t)$, $t \in \R$, $\omega \in \Omega$, denote the coordinate process. Clearly, we
have $\widetilde X_t \stackrel{\rm d}{=} Y_t$ for $t\in\R$. Thus we see that $\widetilde X_t \stackrel{\rm d}{=}
\widetilde X_{-t}$, and by Step 4 above $(\widetilde X_t)_{t \geq 0}$ and $(\widetilde X_t)_{t \leq 0}$ are independent.
Furthermore, by Step 2 we have that $(\widetilde X_t)_{t \geq 0}$ and $(\widetilde X_{t})_{t \leq 0}$ are Markov processes
with respect to $({{\mathcal F}}^{+}_t)_{t \geq 0}$ and $({{\mathcal F}}^{-}_t)_{t \leq 0}$, respectively.

To prove shift invariance, consider arbitrary time-points $t_0\leq\ldots \leq t_n\leq 0\leq t_{n+1}\leq\ldots \leq t_{n+m}$,
$n, m \in \N$. Then by independence of $(\widetilde X_t)_{t\leq 0}$ and $\pro {\widetilde X}$ we have
$$
\int_{\R^d} \ex_{\widetilde \PP^x}\Bigl[\prod_{j=0}^{n+m}f_j(\widetilde X_{t_j})\Bigr] \varphi_0^2\rd x
= \int_{\R^d} \ex_{\widetilde \PP^x} \Bigl[\prod_{j=0}^{n}f_j(\widetilde X_{t_j})\Bigr]
\ex_{\widetilde \PP^x}\Bigl[\prod_{j=n+1}^{n+m}f_j(\widetilde X_{t_j})\Bigr] \varphi_0^2\rd x.
$$
Moreover,
$$
\ex_{\widetilde\PP^x} \Big[\prod_{j=n+1}^{n+m} f_j(\widetilde X_{t_j})\Big] = (\widetilde T_{t_{n+1}} f_{n+1}
\widetilde T_{t_{n+2}-t_{n+1}} f_{n+2} \ldots \widetilde T_{t_{n+m}-t_{n+m-1}}f_{n+m})(x)
$$
and
$$
\ex_{\widetilde\PP^x}\Big[\prod_{j=0}^{n}f_j(\widetilde X_{t_j})\Big]
= \ex_{\widetilde \PP^x}\Big[\prod_{j=0}^{n}f_j(\widetilde X_{-t_j})\Big] =
(\widetilde T_{-t_n} f_{n} \widetilde T_{t_{n}-t_{n-1}} f_{n-1}\ldots \widetilde T_{t_{1}-t_{0}}f_0 )(x).
$$
A combination of the above gives
\begin{eqnarray*}
\int_{\R^d}\ex_{\widetilde\PP^x}\Big[\prod_{j=0}^{n+m}f_j(\widetilde X_{t_j})\Big]\varphi_0^2(x)\rd x
&=&
(\widetilde T_{-t_{n}} f_{n} \ldots \widetilde T_{t_{1}-t_{0}}f_0, \widetilde T_{t_{n+1}} f_{n+1} \ldots
\widetilde T_{t_{n+m}-t_{n+m-1}}f_{n+m})_{L^2(\R^d,\varphi_0^2\rd x)}\\
&=&
(f_0, \widetilde T_{t_1-t_0} f_1 \ldots \widetilde T_{t_{n+m}-t_{n+m-1}}f_{n+m})_{L^2(\R^d,\varphi_0^2\rd x)}.
\end{eqnarray*}
This implies the required time-shift invariance. Formula (\ref{gibbs}) is now a direct consequence, and this
completes the proof of the theorem.

\section{Local path regularity of GST processes}
\subsection{Stochastic differential equation with jumps associated with the GST process }
The generator $\wt L = - \wt H$ of the ground state-transformed process $\pro {\widetilde X}$ can be determined explicitly
yielding \eqref{eq: generator}, which has first appeared in \cite{KL16b}. Since $H$ and $\wt H$ are unitary equivalent
 by \eqref{tilH}, we have $\Dom(\wt H) = U \Dom(H)$, and since $H$ is closed, also $\wt H$ is a closed operator.
 Moreover, since $H$ is self-adjoint, $\wt H$ is also self-adjoint with core $C_{\rm c}^\infty(\R^d)$.

{To study the multifractal spectrum, first we show the existence of a solution to the martingale problem for
$(\wt L, C_{\rm c}^2(\RR^d))$, and provide a jump SDE representation for the ground state-transformed process},
which we call the \emph{ground state SDE}. We write $\rr^d_* = \rr^d \setminus \{0\}$ for a shorthand notation.

For the ground state SDE we will use the following condition (see also Remark \ref{gsreg} below).
\begin{assumption}
\label{assumpdrift}
Let $\varphi_0$ be the ground state of $H$. We assume that the function $x \mapsto \nabla \ln \varphi_0(x)$,
$x \in \R^d$, is locally bounded.
\end{assumption}

\begin{theorem}\label{uniqueness}
Let Assumptions \ref{assump} and \ref{assumpdrift} hold. Consider the stochastic differential equation with jumps
\begin{multline}
\label{sde}
M_t = M_0 + {\sigma} B_t + \int_0^t \sigma\nabla\ln \ph_0(M_s)\,\rd s + \int_0^t\int_{|z|\le 1}
\frac{\ph_0(M_{s}+z)-\ph_0(M_{s})}{\ph_0(M_{s})} z \nu(z) \rd z \rd s  \\
+\int_0^t\int_{|z|\le 1} \int_0^{\infty} z \indiq_{\left\{v\le \frac{\ph_0(M_{s-}+z)}{\ph_0(M_{s-})}\right\}}
\wt N(\rd s, \rd z, \rd v)
+\int_0^t \int_{|z|>1} \int_0^{\infty} z\indiq_{\bra{v\le \frac{\ph_0(M_{s-}+z)}{\ph_0(M_{s-})}}}
N(\rd s, \rd z, \rd v),
\end{multline}
where $\pro B$ is an {$\RR^d$}-valued Brownian motion with covariance matrix $\sigma$, and $N$ is a Poisson random
measure on $[0,\infty)\times\rr^d_*\times[0,\infty)$ with intensity $\rd t\nu(z)\rd z \rd v$.
The GST process constructed in Theorem \ref{th:exphi1}  is a weak solution of \eqref{sde}.
\end{theorem}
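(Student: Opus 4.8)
The plan is to argue in two stages. First I would show that the GST process $\proo{\widetilde X}$ of Theorem~\ref{th:exphi1} solves the martingale problem for $(\wt L,C_{\rm c}^2(\RR^d))$; then I would convert this martingale problem into the driven equation \eqref{sde} via the now-standard correspondence between martingale problems, semimartingale characteristics and weak solutions of stochastic differential equations driven by a Brownian motion and a Poisson random measure, at the cost of enlarging the probability space.

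\emph{Stage 1.} Since $C_{\rm c}^\infty(\RR^d)$ is a core for $\wt H=-\wt L$, every $f\in C_{\rm c}^\infty(\RR^d)$ lies in $\Dom(\wt L)$, $\wt Lf$ is given by \eqref{eq: generator}, and the Dynkin identity $\wt T_tf-f=\int_0^t\wt T_s\wt Lf\,\rd s$ holds in $L^2(\RR^d,\ph_0^2\rd x)$. Combining this with the finite-dimensional distributions \eqref{eq:fddist} and the Markov property of $\proo{\widetilde X}$ from Theorem~\ref{th:exphi1}, a routine argument shows that
\begin{equation*}
N^f_t = f(\widetilde X_t)-f(\widetilde X_0)-\int_0^t\wt L f(\widetilde X_s)\,\rd s,\qquad t\ge 0,
\end{equation*}
is a martingale under $\widetilde\PP^x$ with respect to $(\cF^+_t)_{t\ge 0}$ for $\ph_0^2\rd x$-almost every starting point $x$, and hence also under the stationary law. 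I would then extend this to $f\in C_{\rm c}^2(\RR^d)$ by approximating $f$ by $f_n\in C_{\rm c}^\infty(\RR^d)$ with supports in a fixed compact set and $f_n\to f$ in $C^2$: Assumption~\ref{assumpdrift} makes $\ph_0$ locally Lipschitz, so that $|\ph_0(x+z)-\ph_0(x)|\le c\,|z|$ locally and the correction $\int_{|z|\le 1}\frac{\ph_0(x+z)-\ph_0(x)}{\ph_0(x)}z\,\nu(z)\rd z$ is locally bounded, whence $\wt Lf_n\to\wt Lf$ locally uniformly and one can pass to the limit in the martingale identity after stopping $\proo{\widetilde X}$ at its exit times from large balls. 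This yields a solution of the martingale problem for $(\wt L,C_{\rm c}^2(\RR^d))$.

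\emph{Stage 2.} Applying the martingale property to functions of $C_{\rm c}^2(\RR^d)$ that coincide with the coordinate maps and with quadratic monomials on large balls, and localizing at those exit times, I would deduce that $\proo{\widetilde X}$ is a semimartingale whose finite-variation part (relative to the truncation $z\mapsto z\indiq_{\{|z|\le 1\}}$) is $\int_0^t b(\widetilde X_s)\,\rd s$ with $b(x)=\sigma\nabla\ln\ph_0(x)+\int_{|z|\le 1}\frac{\ph_0(x+z)-\ph_0(x)}{\ph_0(x)}z\,\nu(z)\rd z$, whose continuous martingale part $\widetilde X^{\rm c}$ satisfies $\langle\widetilde X^{{\rm c},i},\widetilde X^{{\rm c},j}\rangle_t=(\sigma\sigma^T)_{ij}\,t$, and whose jump measure $\mu(\rd s,\rd z)$ has predictable compensator $\frac{\ph_0(\widetilde X_{s-}+z)}{\ph_0(\widetilde X_{s-})}\,\nu(z)\rd z\,\rd s$. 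By L\'evy's characterization, after adjoining an independent Brownian motion there is a standard Brownian motion $\pro B$ with $\widetilde X^{\rm c}_t=\sigma B_t$; and by the standard marking representation of an integer-valued random measure with absolutely continuous compensator, after a further enlargement there is a Poisson random measure $N$ on $[0,\infty)\times\RR^d_*\times[0,\infty)$ of intensity $\rd t\,\nu(z)\rd z\,\rd v$ such that $\mu(\rd s,\rd z)=\int_0^\infty\indiq_{\{v\le\ph_0(\widetilde X_{s-}+z)/\ph_0(\widetilde X_{s-})\}}N(\rd s,\rd z,\rd v)$. Writing the canonical semimartingale decomposition of $\proo{\widetilde X}$ with the small jumps compensated and substituting these representations, all terms match \eqref{sde}, so $\proo{\widetilde X}$ is a weak solution.

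The hard part will be Stage 2: extracting the semimartingale characteristics of $\proo{\widetilde X}$ — above all the exact jump compensator — from the martingale problem for the \emph{unbounded} operator $\wt L$, which makes global estimates unavailable and forces the localization via exit times, and then carrying out the enlargement that introduces the auxiliary ``level'' variable $v$ needed to realize the position-dependent jump kernel $\frac{\ph_0(x+z)}{\ph_0(x)}\nu(z)$ as a thinning of the fixed intensity $\nu(z)\rd z\,\rd v$. One must also verify that this enlargement is compatible with the filtration, so that the stochastic integrals in \eqref{sde} make sense; this relies on the local boundedness of $b$ granted by Assumption~\ref{assumpdrift} together with $\ph_0$ bounded and $\int_{|z|>1}\nu(z)\rd z<\infty$, which respectively secure the convergence of the compensated small-jump integral and the local finiteness of the large-jump part.
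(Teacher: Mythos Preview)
Your proposal is correct and matches the paper's strategy: both first show that the GST process solves the $(\wt L, C^2_{\rm c})$ martingale problem via the Markov property and the semigroup identity $\wt T_tf-f=\int_0^t\wt T_r\wt Lf\,\rd r$, and then pass from the martingale problem to a weak solution of \eqref{sde}. The only difference lies in the second step: the paper invokes Kurtz's equivalence theorem \cite[Th.~2.3]{K10} as a black box (after checking its local-boundedness hypotheses, exactly the observations you draw from Assumption~\ref{assumpdrift}), and supplements this with an It\^o-formula computation confirming that any weak solution of \eqref{sde} indeed has generator $\wt L$; you instead propose to unpack the content of that equivalence by hand, extracting the semimartingale characteristics and then invoking L\'evy's characterization for the continuous martingale part and the marking/thinning representation for the jump measure. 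Your explicit route is precisely what Kurtz's theorem packages, so this is a difference of presentation rather than of method, and your identification of the localization via exit times as the place where the unbounded coefficients bite is accurate.
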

\begin{proof}
Let
\begin{equation}
\label{mt}
\widetilde X^f_t = f(\widetilde X_t) - f(\widetilde X_0) - \int_0^t\widetilde L f(\widetilde X_r)\rd r, \quad t\geq 0,
\end{equation}
where $f \in \Dom(\widetilde L)$. Using a general result, see e.g. Kurtz \cite[Th. 2.3]{K10}, we have that
$(\widetilde X, \widetilde\PP^x)$ is a weak solution of the SDE \eqref{sde} if and only if $\widetilde\PP^x$ solves
the $(\widetilde L, C^2_{\rm c})$ martingale problem with initial value $x\in\RR^d$, that is, $\pro {\wt X^f}$ is a
martingale under $\wt\PP^x$, for all $f\in C^2_{\rm c}(\RR^d)$.

Using Assumption \ref{assumpdrift} and that $\varphi_0 > 0$ is bounded continuous, we see that the functions $x
\mapsto \int_{\R^d} \varphi_0(z+x) / \varphi_0(z)(1 \wedge |z|^2) \nu(\rd z)$, $x \mapsto \int_{|z| \leq 1}  z
(\varphi_0(z+x) - \varphi_0(z))/\varphi_0(z) \nu(\rd z)$ and $x \mapsto \nabla \log \varphi_0(x)$ are locally bounded,
and the conditions in \cite[Th. 2.3]{K10} hold. Also, since $\wt L$ is a closed operator, by using a mollifier we
can close $C^\infty_{\rm c}(\R^d)$ in the $C^2$-norm as in \cite[Th. 2.37]{BSW} to obtain that $C^2_{\rm c}(\R^d)
\subset \Dom(\wt L)$.

Let $(M, P^x)$ be a weak solution to \eqref{sde} on a suitable probability space with probability measure $P^x$, and
starting point $P^x(M_0=x)=1$. Write for the drift $b: \RR^d\to \RR^d$,
\begin{align*}
b(x) = \sigma\nabla\ln \ph_0(x) + \int_{|z|\le 1}\frac{\ph_0(x+z)-\ph_0(x)}{\ph_0(x)} z\nu(z)\rd z.
\end{align*}
Using It\^o's formula for $\RR^d$-valued semimartingales, see e.g. \cite{JS03}, for every real-valued $f\in C^2_{\rm c}(\RR^d)$
we have
\begin{eqnarray*}
 f(M_t)
 &=&
 f(M_0) + \int_0^t \nabla f(M_{s-})\cdot\rd (\sigma B_s) + \int_0^t \nabla f(M_{s})\cdot b(M_{s}) \rd s +
 {{1\over 2} \int_0^t \sigma \nabla \cdot \sigma \nabla f(M_s) \rd s} \\
 &&
 + \int_0^t\int_{\RR^d_*}\int_0^\infty \left[f\left(M_{s-}+
 z\indiq_{\bra{v\le \frac{\ph_0(M_{s-}+z)}{\ph_0(M_{s-})}}}\right) - f(M_{s-}) \right]\wt N(\rd s, \rd z, \rd v) \\
 &&
 + \int_0^t\int_{|z|> 1}\int_0^\infty \left[f\left(M_{s-}+ z\indiq_{\bra{v\le \frac{\ph_0(M_{s-}+z)}{\ph_0(M_{s-})}}}\right)
 - f(M_{s-})\right] \rd v\nu(z)\rd z\rd s \\
 &&
 +  \int_0^t\int_{|z|\le 1}\int_0^\infty \left[f\left(M_{s-}+
 z\indiq_{\bra{v\le \frac{\ph_0(M_{s-}+z)}{\ph_0(M_{s-})}}}\right) - f(M_{s-})
 \right.
 \\
 && \qquad \qquad \qquad \qquad \qquad \qquad \left.
  - \nabla f(M_{s-})\cdot z\indiq_{\bra{v\le \frac{\ph_0(M_{s-}+z)}{\ph_0(M_{s-})}}} \right] \rd v\nu(z)\rd z\rd s.
\end{eqnarray*}
Note that in the second-to-last integral the integrand is zero for all $v$ larger than the ground state ratio
$\ph_0(M_{s-}+z)/\ph_0(M_{s-})$. It is thus equal to
\begin{align*}
\int_0^t\int_{|z|>1} \big( f(M_{s-}+z) - f(M_{s-})\big) \frac{\ph_0(M_{s-}+z)}{\ph_0(M_{s-})} \nu(z)\rd z \rd s.
\end{align*}
Similarly, the last integral equals
\begin{align*}
\int_0^t\int_{|z|\le 1} \big( f(M_{s-}+z) - f(M_{s-}) - z \cdot \nabla f(M_{s-}) \big) \frac{\ph_0(M_{s-}+z)}{\ph_0(M_{s-})}
\nu(z)\rd z \rd s.
\end{align*}
Since $f$ has bounded first and second derivatives, the Brownian component and the compensated Poisson integral are
martingales, therefore
\begin{align*}
M^f_t = f(M_t) - f(M_0) - \int_0^t \wt L f(M_s) \rd s, \quad t \geq 0,
\end{align*}
is a $P^x$-martingale.

It remains to show that the probability measures $P^x = \widetilde\PP^x$ constructed in Theorem \ref{th:exphi1}
solve the $(\widetilde L, C^2_{\rm c})$ martingale problem with initial value $x\in\RR^d$. Consider the natural
filtration $\pro {\mathcal F}$ of $\pro {\widetilde X^f}$ and let $0 \leq s \leq t$. Since $\ex_{\widetilde \PP^x}
[\widetilde X^f_t|\mathcal{F}_s] = \wt X^f_s + \ex_{\widetilde \PP^x}[\wt X^f_t-\wt X^f_s|\mathcal{F}_s]$, we
only need to show that the second term vanishes. By the Markov property of $\pro {\widetilde X}$ established in
Theorem \ref{th:exphi1}, we have
$$
\ex_{\widetilde \PP^x}[f(\widetilde X_t)|\mathcal{F}_{s}] = \widetilde T_{t-s}f(\widetilde X_s),
\quad 0 \leq s \leq  t.
$$
By differentiability of the function $t\mapsto \widetilde T_t$, we obtain for all $t\geq 0$ that
$\frac{d}{dt}\widetilde T_tf=  \widetilde L \widetilde T_{t} f = \widetilde T_t \widetilde L f$, and hence
$\widetilde T_{t}f - f = \int_0^t \widetilde L \widetilde T_r f \rd r$.
Thus we have
\begin{eqnarray*}
\ex_{\widetilde \PP^x}\left[ f(\widetilde X_t) - f(\widetilde X_s) -
\int_s^t \widetilde L f(\widetilde X_r)\rd r\left| \mathcal{F}_s\right.\right]
&=&
\widetilde T_{t-s} f(\widetilde X_s)-f(\widetilde X_s) - \int_s^t \widetilde L \widetilde T_{r-s} f (\widetilde X_s)\rd r \\
&=&
\widetilde T_{t-s}f(\widetilde X_s) - f(\widetilde X_s) - \int_0^{t-s} \widetilde L \widetilde T_r f (\widetilde X_s)\rd r \\
&=&
\widetilde T_{t-s}f(\widetilde X_s) - f(\widetilde X_s) - \widetilde T_{t-s}f(\widetilde X_s) + f(\widetilde X_s) =0,
\end{eqnarray*}
as required.
\end{proof}

\begin{remark}
\label{gsreg}
\hspace{100cm}
\begin{trivlist}
\rm{
\item[\, (1)]
In general, little information is available on the regularity of $\varphi_0$. In some specific cases of potentials
growing to infinity at infinity and the operator $L = (-\rd^2/\rd x^2)^{1/2}$, it is known that the ground state is
analytic \cite{LM12,DL16}. However, it is also known that the ground state for Brownian motion in a finitely deep
potential well, i.e., $V(x) = - v \indiq_{\{|x| \leq a\}}$, $v, a > 0$, is only $C^1$.
\item[\, (2)]
The conditions under which the ground state SDE has a solution and Theorem \ref{uniqueness} holds can be improved.
For a large class of L\'evy processes $\pro X$ and potentials $V$, it can be shown that for large enough $|x|$ and
suitable constants $C_1, C_2 > 0$,
\begin{eqnarray*}
&& C_1 \frac{\nu(x)}{V(x)} \leq \varphi_0(x) \leq C_2 \frac{\nu(x)}{V(x)} \quad
\mbox{for $V(x) \to \infty$ \ as \ $|x|\to\infty$} \\
&& \\
&& C_1 \nu(x) \leq \varphi_0(x) \leq C_2 \nu(x) \qquad \mbox{for $V(x) \to 0$ \ as \ $|x|\to\infty$}.
\end{eqnarray*}
For precise statements and conditions we refer to \cite{KL15,KL16}. For illustration consider $d=1$, $V(x)
\asymp x^{2m}$, $m > 0$, and a symmetric $\alpha$-stable process; then the above implies $\varphi_0(x) \asymp
|x|^{-d-\alpha-2m}$, and thus far enough from the origin the drift would become $b(x) = \sigma\frac{\rd}{\rd x}
\ln\varphi_0(x) \asymp - \frac{\sgn(x)}{|x|}$. Hence for large enough values of $X_t$ we get $X_t b(X_t) < 0$,
and this pull-back mechanism would prevent the paths from exploding. Since our main concern here is the
multifractal behaviour of GST processes, the ground state SDE will be studied in further detail elsewhere.
\item[\, (3)]
We note that we are not concerned with uniqueness of the solution of the martingale problem. Since we show below
that any solution of the SDE \eqref{sde} has the same multifractal nature, we only need to know that the GST
process is a solution.
}
\end{trivlist}
\end{remark}

\subsection{Multifractal spectrum of GST processes}
Now we are in the position to state and prove the multifractal nature of local H\"older exponents of ground
state-transformed processes via the ground state SDE. Recall the notations in \eqref{eq: spec with B}-\eqref{eq: spec pure jump}.

\begin{theorem}
\label{spectrum}
Let Assumptions \ref{assump} and \ref{assumpdrift} hold, and 
$(M, \PP^x)$ be a weak solution of \eqref{sde}.
\begin{itemize}
\item[(1)]
If {$\sigma\neq 0$}, i.e., the underlying L\'evy process has a Brownian component, then almost surely
\begin{align*}
D_M(h)= D^1_X(h), \quad h > 0.
\end{align*}
\item[(2)]
If $\sigma=0$, i.e., the underlying L\'evy process is a pure jump process, and
\vspace{0.1cm}
\begin{enumerate}
\item[(i)]
either $\be_\nu\in[1,2]$,
\vspace{0.1cm}
\item[(ii)]
or $\be_\nu\in(0,1)$ with $\varphi_0\in C^{k+1}(\RR^d)$ and  $k\ge 1/\be_\nu$,
\end{enumerate}
\vspace{0.1cm}
then almost surely
\begin{align*}
D_M(h) = D^2_X(h), \quad h > 0.
\end{align*}
\end{itemize}
\end{theorem}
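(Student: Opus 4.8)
The plan is to exploit the ground state SDE \eqref{sde} together with Remark \ref{gsreg}(3): it suffices to prove the spectral identity for an arbitrary weak solution $(M,\PP^x)$ of \eqref{sde}. (For the GST process itself one could alternatively note that $\widetilde\PP^x$ is locally equivalent to $\PP^x$, the $h$-transform density $e^{\lambda_0 t}\varphi_0(X_t)\varphi_0(x)^{-1}e^{-\int_0^t V(X_s)\rd s}$ being a strictly positive martingale, so the a.s.\ spectrum would be inherited verbatim from $\pro X$; but this does not cover general solutions of \eqref{sde}, for which no uniqueness is claimed.) I would first localize: fix $T>0$ and, for $K>0$, set $\Omega_K=\{\sup_{0\le s\le T}|M_s|\le K\}$. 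Since $M$ is a.s.\ càdlàg, hence bounded on $[0,T]$, we have $\PP^x(\Omega_K)\uparrow 1$, and it is enough to argue on each $\Omega_K$ and let $K\to\infty$ and $T\to\infty$ at the end (Hausdorff dimension being stable under countable unions). On $\Omega_K$ the drift coefficient of \eqref{sde} is bounded — of class $C^k$ in case (2)(ii) — and, since $\varphi_0$ is continuous and strictly positive, the accepted-jump intensity obeys, for $|z|\le 1$,
\[
c_K\,\nu(z)\;\le\;\frac{\varphi_0(M_{s-}+z)}{\varphi_0(M_{s-})}\,\nu(z)\;\le\;C_K\,\nu(z),\qquad 0<c_K\le C_K<\infty ,
\]
so the small accepted jumps of $M$ have, up to bounded factors, the same intensity as the jumps of the underlying Lévy process $\pro X$ — in particular the same Blumenthal--Getoor index $\beta_\nu$ — while the accepted jumps of size $>1$ form an a.s.\ finite subset of $[0,T]$.

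Next I would decompose $M_t=M_0+D_t+\mathcal M_t+J_t$ on $[0,T]$, with $J$ the finite large-jump part, $\mathcal M$ the sum of $\sigma B_t$ and the thinned compensated small-jump martingale, and $D$ the remaining absolutely continuous term, and transfer the known results to $\mathcal M$. Using the two-sided bound to couple the thinned small-jump point process between genuine Poisson point processes with Lévy measures $c_K\nu|_{\{|z|\le 1\}}$ and $C_K\nu|_{\{|z|\le 1\}}$ (which have index $\beta_\nu$), the multifractal analysis of Jaffard \cite{J99}, and of Balança \cite{B14} for $\beta_\nu=2$, applies to these and, by the sandwich, to $\mathcal M$: writing $h_{\rm jump}(t_0)$ for the jump-approximation exponent at $t_0$, one gets $\dimh\{h_{\rm jump}=h\}=\beta_\nu h$ for all $h>0$, together with the generic oscillation bounds for the continuous part — exponent exactly $1/2$ everywhere if $\sigma\ne 0$, and the bound $|s-t_0|^{1/\beta_\nu-\varepsilon}$ everywhere if $\sigma=0$. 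The finite set $J$ perturbs only finitely many scales and changes no exponent.

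It remains to check that $D$ does not alter the spectrum, i.e.\ that $H_M(t_0)=\min(h_{\rm jump}(t_0),h_{\rm reg})$ with $h_{\rm reg}=1/2$ in case (1) and $h_{\rm reg}=1/\beta_\nu$ in case (2). When $\sigma\ne 0$, or $\sigma=0$ and $\beta_\nu\in[1,2]$, the cap satisfies $h_{\rm reg}\le 1$; since $D$ is the primitive of a bounded function along the càdlàg path $M$, the map $t\mapsto D_t$ is locally Lipschitz, so $H_D\ge 1\ge h_{\rm reg}$ everywhere, yielding $D_M=D^1_X$, resp.\ $D_M=D^2_X$. The delicate case is $\sigma=0$, $\beta_\nu\in(0,1)$, where $h_{\rm reg}=1/\beta_\nu>1$ and mere Lipschitz regularity of $D$ is insufficient: here the hypothesis $\varphi_0\in C^{k+1}(\R^d)$ with $k\ge 1/\beta_\nu$ is used to show that the drift part is smooth enough that, after subtracting the Taylor polynomial of degree $<\lfloor 1/\beta_\nu\rfloor$ allowed in the definition of $C^{h}(t_0)$ with $h=1/\beta_\nu$, it leaves a remainder that is $o(|t-t_0|^{1/\beta_\nu})$ at every point, so $H_D\ge 1/\beta_\nu$ and the jump part again dictates the exponent, giving $D_M=D^2_X$. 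I expect this last point — pinning down exactly how much regularity of $\varphi_0$ is needed so that the position-dependent drift produced by the ground-state ratio does not spoil the spectrum in the low-index pure-jump regime — to be the main obstacle, along with the book-keeping needed to run the Jaffard/Balança lower-bound (ubiquity and random-covering) constructions for the thinned, path-dependent family of jumps rather than for a genuine Lévy process.
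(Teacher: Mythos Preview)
Your overall architecture --- localize to $\Omega_K=\{\sup_{s\le T}|M_s|\le K\}$, decompose into drift $+$ Brownian $+$ thinned small-jump martingale $+$ large jumps, and check that the drift is regular enough not to interfere --- matches the paper exactly. The paper also uses your two-sided bound $c_K\nu\le(\varphi_0(M_{s-}+z)/\varphi_0(M_{s-}))\nu\le C_K\nu$ as the key structural input, and removes the localization at the end just as you do.

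The substantive difference is in how the Jaffard/Balan\c{c}a machinery is transferred to the thinned martingale $\mathcal M$. Your proposal is to ``sandwich'' the accepted-jump point process between genuine Poisson processes of intensities $c_K\nu$ and $C_K\nu$ and read off the spectrum. This works cleanly for the jump-approximation half (upper bound on $H_M$, covering property): the paper does essentially this in its Lemma~3.1, embedding a genuine Poisson point process of intensity $c_K\nu$ inside the accepted jumps via an auxiliary uniform mark and running the Shepp--Bertoin covering test. But the sandwich does \emph{not} deliver the oscillation bound (lower bound on $H_M$): $\mathcal M$ is not a L\'evy process, only a martingale with path-dependent compensator, and there is no pathwise inequality $|\mathcal M_t-\mathcal M_s|\le|\text{L\'evy}_t-\text{L\'evy}_s|$ to inherit. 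The paper therefore redoes Jaffard's increment estimate directly for the truncated martingales $Y_n$ (its Lemma~3.2): apply It\^o's formula to $\exp(\widetilde Y_n\cdot\xi)$, use the upper bound $C_K$ on the ratio inside the compensator, and close with Gronwall. This is what you label ``book-keeping'', but it is the technical core; once it is in place the ubiquity/random-covering lower bound goes through as in \cite{J99,B14} with the sets $A_\delta$ built from the accepted jumps.

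One smaller point on case~(2)(ii): your uniform claim $H_D\ge 1/\beta_\nu$ is stronger than what the paper proves and than what is needed. The paper's observation is that if $g\in C^k$ with $k\ge H_M(t)$ then $H_{\int_0^\cdot g(M_s)\rd s}(t)\ge 1+H_M(t)$; this only gives $H_D(t)>H_M(t)$ pointwise, not $H_D\ge 1/\beta_\nu$, but that strict inequality already forces $H_M(t)=H_Y(t)$ at every continuity point.
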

\noindent
In the remainder of this section we prove this theorem through a sequence of auxiliary results.

\subsubsection{Associated Poisson point process}
Recall that the jumps of the Poisson measure $N$ give rise to a Poisson point process with  measure
$\nu(z)\rd z$. The pointwise regularity of L\'evy processes with infinite jump measure, i.e., $\int_{\RR^d}\nu(z)\rd z=
\infty$, relies on a configuration of a dense set of jumps. For the process $\pro M$ the jump configuration is more
involved as it depends on the entire paths of the process. Indeed, the factor featuring the indicator function in the 
compensated Poisson integral in \eqref{sde} implies that our process jumps only as long as the ground state ratio is 
not too small. Thus $\pro M$ jumps less often than the underlying L\'evy process. We prove that the underlying Poisson 
point process characterizes the pointwise regularity of $\pro M$.

Let $N(\rd t,  \rd z, \rd v)$ be a Poisson measure with intensity $\rd t\, n(\rd z, \rd v)$ on $\RR^+\times E$ with
$E=\RR^d_*\times (0,\infty)$, endowed with the product Borel $\sigma$-field ${\mathcal B}(E)$. Let $\{E_k, k\in\NN_*\}$
be a partition of $E$, with $E_k\in\mathcal B(E)$ and $n(E_k)<\infty$.  It is well-known \cite[Ths 8.1, 9.1]{IW} that
there exists
\begin{itemize}
\item a sequence of  exponential random variables  $\{\tau^{(k)}_i, i\in\NN\}$ with parameter $n(E_k)$, \sk
\item a sequence of random variables $\{\xi^{(k)}_i, i\in\NN\}$ with distribution $\indiq_{E_k}n(\rd z,\rd v)/n(E_k)$, \sk
\end{itemize}
such that
\begin{align*}
 N((0,t]\times U) = |\bra{s\in D: s\le t, \, p(s)\in U}|, \; \mbox{ for all } t>0, \, U \in \mathcal B(E),
\end{align*}
where $p$ is the point process defined by
\begin{align*}
  p\( \sum_{\ell=0}^i \tau^{(k)}_\ell\) = \xi^{(k)}_i, \quad k, i=1,2,\ldots
 \end{align*}
and
\begin{align*}
D = \bigcup_{k=0}^\infty \bra{\sum_{\ell=0}^i \tau^{(k)}_\ell: i\in\NN }.
\end{align*}
Here, all $\tau^{(k)}_i, \xi^{(k)}_i$ are mutually independent random variables on the same probability space.  Extending
the probability space, if necessary, by passing to a product probability space, we can find a sequence of uniform random
variables $\eta_i$ in $[0,1]$ that is independent of $p$. Define $p': D\to E\times [0,1]$ by
\begin{align*}
p'\( \sum_{\ell=0}^i \tau^{(k)}_\ell\) = (\xi^{(k)}_i,\eta_i) \quad k,i=1,2,\ldots
\end{align*}
It can be shown \cite[Ths 8.1, 9.1]{IW} that the counting measure
\begin{align*}
 N_{p'}((0,t]\times U\times I) = \left|\bra{s\in D: s\le t, \, p'(s)\in U\times I}\right|, \; \mbox{ for all }
 t>0, \, U \in \mathcal B(E),  \, I\in\mathcal B([0,1]),
\end{align*}
is also a Poisson measure, with intensity  $\rd t\, n(\rd z, \rd v)\indiq_{[0,1]}(x)\rd x$. In particular, almost surely,
\begin{align}\label{eq:N=N_p'}
N((0,t]\times U) = N_{p'}((0,t]\times U\times [0,1]).
\end{align}
From now on, we consider the Poisson measure $N$ as part of the weak solution
$(M, \widetilde \PP^x)$ of the SDE \eqref{sde} on a probability space. Possibly on the extended probability space, we have
$N_{p'}$ satisfying \eqref{eq:N=N_p'} which will serve as an auxiliary measure to prove a covering property satisfied by a
family of point systems induced by the jumps of our process $\pro M$. Informally, the measure $N_{p'}$ allows us to remove
spatial dependence of the jump kernel; a similar argument has been first used in \cite{X15}.  In what follows we will write
\begin{align*}
p = \{(s,z(s), v(s)): \, s\in  D\} \quad \mbox{and} \quad  p' = \{(s,z(s), v(s), x(s)): \, s\in D \}.
\end{align*}

\subsubsection{H\"older regularity}\label{sec:holder}

First we determine the pointwise H\"older exponent of the sample paths of the ground state SDE under the assumption
that the ratio of ground state evaluations appearing in the coefficients of the SDE is bounded both from below and above,
i.e., we assume that there exists $0<c<1$ such that
\begin{align}
\label{eq: ratio control} c \le \frac{\ph_0(x+z)}{\ph_0(x)} \le 1/c,    \quad x\in \rr^d, |z|\le 1.
\end{align}
In a next step we remove this constraint by using a localization argument to get the result in a desirable generality.

The following general result is due to Jaffard \cite[Lem. 1]{J99}, which is essential in deriving an upper bound for
the H\"older exponent of a locally bounded function with a dense set of jump discontinuities.

\begin{lemma}
\label{jumplemma}
Let $f:\rr \to \rr^d$ be a c\`adl\`ag function having a dense set of jump discontinuities of size $z_n$ at the time-points
$t_n$. Then, for every $t\in \rr$ and every sequence of jump discontinuities $t_{n_k} \to t$ as $k \to \infty$, we have
\begin{align*}
H_f(t) \le \liminf_{k\to \infty} \frac{\ln z_{n_k}}{\ln |t-t_{n_k}|}.
\end{align*}
\end{lemma}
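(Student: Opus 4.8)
\textbf{Proof plan for Lemma \ref{jumplemma}.}
The plan is to use the definition of the pointwise H\"older space directly, together with the fact that a jump discontinuity creates a lower bound on the oscillation of $f$ near the jump point, which competes against the polynomial upper bound allowed by membership in $C^h(t)$. Fix $t\in\rr$ and a sequence $t_{n_k}\to t$ of jump times with jump sizes $z_{n_k}$. Set $h = H_f(t)$; if $h=0$ the inequality is trivial, and if the right-hand side is $+\infty$ there is nothing to prove, so we may assume $0<h<\infty$ and pick any $h'<h$ with $h' \notin \NN$ (so $\lfloor h'\rfloor$ is unambiguous and $f\in C^{h'}(t)$). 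By definition there is a polynomial $P$ with $\deg P < \lfloor h'\rfloor \le \lfloor h\rfloor$ and constants $c,\delta>0$ such that $|f(x)-P(x-t)|\le c|x-t|^{h'}$ for $x\in B(t,\delta)$.

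Next I would exploit the jump at $t_{n_k}$. Since $f$ is c\`adl\`ag with a jump of size $z_{n_k}$ at $t_{n_k}$, we have $|f(t_{n_k}) - f(t_{n_k}-)| = |z_{n_k}|$, where $f(t_{n_k}-) = \lim_{x\uparrow t_{n_k}} f(x)$. Applying the H\"older bound at $x=t_{n_k}$ and, taking a limit from the left, at $x=t_{n_k}-$ (the polynomial $P$ is continuous, so $|f(t_{n_k}-) - P(t_{n_k}-t)|\le c|t_{n_k}-t|^{h'}$ as well), the triangle inequality gives
\begin{align*}
|z_{n_k}| = |f(t_{n_k}) - f(t_{n_k}-)| \le |f(t_{n_k}) - P(t_{n_k}-t)| + |P(t_{n_k}-t) - f(t_{n_k}-)| \le 2c\,|t_{n_k}-t|^{h'}
\end{align*}
for all $k$ large enough that $t_{n_k}\in B(t,\delta)$. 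Taking logarithms (note $|t_{n_k}-t|\to 0$ so $\ln|t_{n_k}-t|<0$ eventually, which reverses the inequality) yields $\ln|z_{n_k}| \le \ln(2c) + h'\ln|t_{n_k}-t|$, hence $\frac{\ln|z_{n_k}|}{\ln|t_{n_k}-t|} \ge h' + \frac{\ln(2c)}{\ln|t_{n_k}-t|}$. Letting $k\to\infty$, the last term vanishes, so $\liminf_{k\to\infty}\frac{\ln z_{n_k}}{\ln|t-t_{n_k}|}\ge h'$. Since $h'<h=H_f(t)$ was arbitrary, we conclude $\liminf_{k\to\infty}\frac{\ln z_{n_k}}{\ln|t-t_{n_k}|}\ge H_f(t)$, which is the claim.

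The only delicate point is the use of the polynomial $P$: one must be careful that the \emph{same} polynomial controls $f$ on both sides of each $t_{n_k}$, which it does because $C^{h'}(t)$ is defined with a single polynomial on a full neighbourhood $B(t,\delta)$, and that $P$, being a polynomial, is continuous so that the estimate passes to the left limit $f(t_{n_k}-)$. A minor bookkeeping issue is the case $\lfloor h\rfloor = 0$ (i.e. $h<1$), where $P\equiv 0$ is forced and the argument is identical but simpler; taking $h'$ irrational avoids any ambiguity in $\lfloor h'\rfloor$ when $h$ itself is an integer. No real obstacle arises — the content is entirely the oscillation-versus-polynomial comparison above.
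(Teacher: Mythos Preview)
Your argument is correct and is the standard direct proof: bound the oscillation at the jump point by twice the $C^{h'}$ estimate using continuity of the approximating polynomial, then take logarithms and pass to the limit. The paper does not give its own proof of this lemma at all --- it simply cites it as \cite[Lem.~1]{J99} --- so there is nothing further to compare; your write-up is exactly what a self-contained version would look like.
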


Clearly, only the small jumps have an impact on the local regularity. Write
\begin{align*}
J = \bra{ s\ge 0:  |z(s)|\le 1,  \, v(s)\le  \frac{\ph_0(M_{s-}+z(s))}{\ph_0(M_{s-})} }.
\end{align*}
By the properties of the (compensated) Poisson integral, the solution to the ground state SDE \eqref{sde} makes
a jump at each $s\in J$, of size $|z(s)|$. Borrowing an idea from \cite{J99}, we consider a family of
limsup sets built from the Poisson point process $p$. Recall that $\be_\nu$ is the Blumenthal-Getoor
index of the L\'evy measure $\nu(z)\rd z$ defined in \eqref{eq: BG index}.
 For all $\de>0$, define
\begin{align*}
A(\e, \de) = \bigcup_{s\in J, \  |z(s)|\le \e} (s-|z(s)|^{\be_\nu\de}, s+ |z(s)|^{\be_\nu\de}),
\end{align*}
and
\begin{align}\label{eq: A_de}
A(\de) = \limsup_{\e\downarrow 0} A(\e, \de).
\end{align}
This family of sets satisfies a convenient covering property when $\de<1$.
\begin{lemma}\label{lem: covering subcritical}
For all $\de<1$ we have $A_\de= [0,\infty)$, almost surely.
\end{lemma}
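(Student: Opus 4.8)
The claim is that for $\delta<1$, the random $\limsup$ set $A(\delta)$ defined in \eqref{eq: A_de} covers all of $[0,\infty)$ almost surely. I would prove this by fixing a generic time $t_0\ge 0$ and showing that, with probability one, $t_0\in A(\delta)$; since $A(\delta)$ is a union of open intervals this pointwise statement can be upgraded to the full covering by a standard argument (cover $[0,\infty)$ by finitely many rationals on each compact piece, or argue that the complement is a.s.\ empty because it is a.s.\ null and closed — indeed, once the complement has measure zero and $A(\delta)$ is open, we must check the complement is actually empty, which follows from the limsup structure below). The crux is therefore: for fixed $t_0$, almost surely there are infinitely many jump times $s\in J$ with $|z(s)|$ arbitrarily small and $|s-t_0|\le |z(s)|^{\beta_\nu\delta}$.

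First I would reduce the spatial dependence of the jump kernel using the auxiliary Poisson measure $N_{p'}$ with intensity $\rd t\,\nu(z)\rd z\,\rd v\,\indiq_{[0,1]}(x)\rd x$ introduced just above: whether a point $(s,z(s),v(s))$ of $p$ contributes a jump of $\pro M$ depends on the path through the event $\{v(s)\le \varphi_0(M_{s-}+z(s))/\varphi_0(M_{s-})\}$, but under the boundedness hypothesis \eqref{eq: ratio control} this ratio is bounded below by $c\in(0,1)$. Hence every point of $p$ with $v(s)\le c$ is automatically in $J$, regardless of the (complicated, path-dependent) position $M_{s-}$. So it suffices to work with the thinned Poisson point process $p_c := \{(s,z(s)) : s\in D,\ v(s)\le c,\ |z(s)|\le 1\}$, which is an \emph{ordinary} Poisson point process on $[0,\infty)\times\{0<|z|\le 1\}$ with intensity $\rd s\otimes c\,\nu(z)\rd z$ — no path dependence remains. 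This reduces the problem to a statement purely about a Poisson point process.

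Next, the heart of the matter is a Borel--Cantelli / second-moment estimate for the ordinary Poisson point process $p_c$. Fix $t_0$ and, for a dyadic scale $\varepsilon_n = 2^{-n}$, consider the event $G_n$ that there exists a point $(s,z(s))\in p_c$ with $\varepsilon_{n+1}<|z(s)|\le\varepsilon_n$ and $|s-t_0|\le |z(s)|^{\beta_\nu\delta} \le \varepsilon_{n+1}^{\beta_\nu\delta}$ (a slightly wasteful but convenient bound on the interval length). By the definition of the Blumenthal--Getoor index $\beta_\nu$ in \eqref{eq: BG index}, for any $\gamma>\beta_\nu$ we have $\int_{|z|\le 1}|z|^\gamma\nu(z)\rd z<\infty$, while for any $\gamma<\beta_\nu$ the integral over $\{|z|\le \varepsilon\}$ diverges as $\varepsilon\downarrow 0$ fast enough; quantitatively, one gets $\nu(\{\varepsilon/2<|z|\le\varepsilon\})\gtrsim \varepsilon^{-\beta_\nu+o(1)}$ along a subsequence of scales (this is exactly the kind of elementary consequence of the definition of $\beta_\nu$ that Jaffard uses). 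The expected number of points of $p_c$ in the relevant "box" $\{|s-t_0|\le \varepsilon_{n+1}^{\beta_\nu\delta}\}\times\{\varepsilon_{n+1}<|z|\le\varepsilon_n\}$ is, up to constants, $\varepsilon_{n+1}^{\beta_\nu\delta}\cdot \nu(\{\varepsilon_{n+1}<|z|\le\varepsilon_n\})$, which along the good subsequence of scales behaves like $\varepsilon_n^{\beta_\nu\delta}\cdot \varepsilon_n^{-\beta_\nu+o(1)} = \varepsilon_n^{\beta_\nu(\delta-1)+o(1)}\to\infty$ precisely because $\delta<1$. Since these boxes are disjoint for different $n$ (disjoint ranges of $|z|$), the events $G_n$ are \emph{independent}, and $\sum_n \PP(G_n)=\infty$ because $\PP(G_n)=1-e^{-\EE[\#\text{points}]}\to 1$. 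By the second Borel--Cantelli lemma, infinitely many $G_n$ occur a.s., which gives infinitely many jump times $s\in J$ (via $p_c\subset$ jump configuration of $\pro M$) accumulating at $t_0$ with $|s-t_0|\le |z(s)|^{\beta_\nu\delta}$; hence $t_0\in A(\delta)$ a.s.

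\textbf{Main obstacle.} The genuinely delicate point is \emph{not} the Borel--Cantelli computation but passing from "$t_0\in A(\delta)$ a.s.\ for each fixed $t_0$" to "$A(\delta)=[0,\infty)$ a.s." — i.e.\ exchanging the quantifier over $t_0$ with the almost-sure quantifier. Here I would exploit the limsup structure: $A(\delta)=\bigcap_{m}\bigcup_{s\in J,|z(s)|\le 1/m}(s-|z(s)|^{\beta_\nu\delta},s+|z(s)|^{\beta_\nu\delta})$, so the complement of $A(\delta)$ in $[0,T]$ is $\bigcup_m C_m$ where $C_m$ is the (closed, in $[0,T]$) set of points not covered by any jump interval coming from jumps of size $\le 1/m$. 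The Fubini/Borel--Cantelli argument above shows each $C_m$ has Lebesgue measure zero a.s., hence $[0,T]\setminus A(\delta)$ is a.s.\ Lebesgue-null; but being a countable union of closed sets it is meagre only if each $C_m$ has empty interior, which is automatic from the covering being dense. To actually get \emph{emptiness} rather than nullity, I would instead run the second-moment argument uniformly: partition $[0,T]$ into $\sim \varepsilon_n^{-\beta_\nu\delta}$ intervals of length $\varepsilon_n^{\beta_\nu\delta}$ and show that with overwhelming probability (summably small failure probability) \emph{every} such interval contains a point of $p_c$ with $|z|\asymp\varepsilon_n$, so that by Borel--Cantelli every $t_0\in[0,T]$ is within $\varepsilon_n^{\beta_\nu\delta}$ of such a jump for all large $n$ along the good subsequence — giving $t_0\in A(\delta)$ simultaneously for all $t_0$. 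The divergence rate $\varepsilon_n^{\beta_\nu(\delta-1)+o(1)}\to\infty$ of the per-box expectation is exactly what makes this union bound over $\varepsilon_n^{-\beta_\nu\delta}$ boxes summable, so the hypothesis $\delta<1$ is used twice and crucially. This is the step I expect to require the most care to write cleanly.
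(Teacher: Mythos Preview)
Your plan is correct and the reduction is in fact cleaner than the paper's: you thin simply by keeping the points of $p$ with $v(s)\le c$, which under \eqref{eq: ratio control} lands inside $J$ and yields an honest Poisson point process with intensity $c\,\rd s\,\nu(z)\rd z$. The paper instead introduces the auxiliary uniform variable $x(s)$ and the set $J'$ so that the two constraints on $(v(s),x(s))$ together have conditional probability exactly $c$; this also produces a homogeneous Poisson system, but your shortcut achieves the same reduction with less machinery.

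Where you genuinely diverge from the paper is in the covering step. The paper does not run a Borel--Cantelli/union-bound argument at all; it identifies the image measure $\mu(\rd s,\rd y)=\sum_{s\in J'}\delta_{(s,|z(s)|^{\delta\beta_\nu})}$ as Poisson with intensity $c\,\rd s\,\pi_\delta(\rd y)$ and then invokes the Shepp--Bertoin integral test for Poisson covering of the line: $A_\delta'=[0,\infty)$ a.s.\ follows from the divergence of $\int_0^1\exp\big(2c\int_t^1\pi_\delta((y,1))\,\rd y\big)\rd t$, which is then checked by the same dyadic estimate on $C_j=\nu(\{2^{-j-1}<|z|\le 2^{-j}\})$ you use (infinitely many $j$ with $C_j\ge 2^{rj}$ for $r\in(\delta\beta_\nu,\beta_\nu)$). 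This criterion delivers full covering directly, so the paper never faces the quantifier-exchange issue you flag as the main obstacle.

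Your alternative---partitioning $[0,T]$ into $\asymp\varepsilon_n^{-\beta_\nu\delta}$ boxes and using that the per-box expectation blows up like $2^{n\beta_\nu(1-\delta)+o(n)}$ along the good subsequence, so the union bound is summable---is sound; just be careful to (i) size the boxes as $(\varepsilon_n/2)^{\beta_\nu\delta}$ so that a jump with $|z(s)|\in(\varepsilon_n/2,\varepsilon_n]$ in the same box really satisfies $|t_0-s|\le|z(s)|^{\beta_\nu\delta}$, and (ii) run Borel--Cantelli along the infinite subsequence of scales where $C_j\ge 2^{rj}$. The paper's route is shorter because it outsources this uniformity to a known covering theorem; yours is more self-contained.
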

\begin{proof}
Define
\begin{align*}
J' = \bra{ s\ge 0:  |z(s)|\le 1,  \, v(s)\le  \frac{\ph_0(M_{s-}+z(s))}{\ph_0(M_{s-})}, \,
x(s) \le \frac{c \ph_0(M_{s-})}{\ph_0(M_{s-}+z(s))} }  \subset J,
\end{align*}
and $A'_\de$ as in \eqref{eq: A_de} with $J$ replaced by $J'$. Observe that by the lower bound in \eqref{eq: ratio control},
the right hand side of the bound concerning $x(s)$ in the above set is a random number in $[0,1]$. Since $A'_\de\subset A_\de$
for all $\de\ge 0$,  it remains to show that for any fixed $\de<1$, we have $A'_\de = [0,\infty)$ almost surely. The result
then follows by the monotonicity of the sets $A_\de$ in $\de$.

\medskip
\noindent
\emph{Step 1}:  First we note that the counting measure
\begin{align*}
 \mu(\rd s, \rd y) =\sum_{s\in J}  \delta_{(s, |z_s|^{\de\be_\nu})}
\end{align*}
is a Poisson random measure with intensity $\rd s (c\pi_\de(\rd y))$ on $\rr^+\times(0,1]$, where $\pi_\de$ is the image
measure of $\nu(z)\rd z 1_{|z|\le 1}$ by the map $z\mapsto |z|^{\de\be_\nu}$ and  $c$ is the constant in
\eqref{eq: ratio control}.  For any predictable  non-negative process $(s,y)\mapsto H(s,y)$,
\begin{align*}
&\int_0^t\int_0^1 H(s,y) \mu(\rd s, \rd y)- \int_0^t\int_{|z|\le 1} H(s,|z(s)|^{\de\be_\nu}) c\nu(z)\rd z \rd s  \\
&=
\int_0^t\int_{|z|\le 1} \int_0^\infty \int_0^1 \indiq_{\bra{v(s)\le  \frac{\ph_0(M_{s-}+z(s))}{\ph_0(M_{s-})}, \, x(s) \le
\frac{c \ph_0(M_{s-})}{\ph_0(M_{s-}+z(s))}}} H(s, |z(s)|^{\de\be_\nu}) \wt N_{p'}(\rd s,\rd z, \rd v, \rd x)
\end{align*}
is a local martingale. Then the compensator of $\mu$ is $c\,\rd t\,\pi_\de(\rd y)$.
By \cite[Ch.2, Th.1.8]{JS03}, $\mu$ is a Poisson measure with intensity $c \,\rd t \, \pi_\de(\rd y)$.

\mk
\noindent
\emph{Step 2}: Applying the integral test of covering for limsup sets built from a Poisson measure,  see
\cite{B94,S72}, we only need to show that
  \begin{align*}
  \int_0^1 \exp\( 2\int_t^1 c\pi_\de((y,1)) \rd y\) \rd t = \infty.
  \end{align*}
The divergence of this integral can be proved by a modification of \cite[Lem. 2]{J99}. Note that
\begin{align*}
\int_t^1 c\pi_\de((y,1))\rd y = c\int_{t^{1\over \de\be_\nu}}^1\left(\int_{u<|x|<1}\nu(x)\rd x\right)
\de\be_\nu u^{\de\be_\nu -1} \rd u.
\end{align*}
Write $C_j= \int_{2^{-j-1}<|x|\le 2^{-j}}\nu(x)\rd x$ and $\omega(u)= \int_{u<|x|<1}\nu(x)\rd x$. Let $j(t)$ be the
unique integer such that $2^{j(t)-3}< t^{\frac{1}{\de\be_\nu}}\le 2^{-j(t)-2}$. Then we have
\begin{align*}
\int_{t^{1\over \de\be_\nu}}^1\omega(u)\de\be_\nu u^{\de\be_\nu -1} \rd u &\ge
\int_{2^{j(t)-2}}^{2^{j(t)-1}} \omega(u)\de\be_\nu u^{\de\be_\nu -1} \rd u \\
&\ge C_{j(t)} \de\be_\nu (2^{-j(t)-2})^{\de\be_\nu-1}2^{-j(t)-2} \\
&= C_{j(t)} \de\be_\nu (2^{-j(t)-2})^{\de\be_\nu}.
\end{align*}
By the definition of $\be_\nu$, for any $r\in (\de\be_\nu, \be_\nu)$, there exist infinitely many $j$ such that
$C_j\ge 2^{rj}$. For any such $j$ we have
\begin{align*}
\int_{2^{-(j+3)\de\be_\nu}}^{2^{-(j+2)\de\be_\nu}} \exp\left( 2\int_t^1 c\pi_\de((y,1)) \rd y \right) \rd t \ge
(2^{\de\be_\nu} -1) 2^{-(j+3)\de\be_\nu} \exp (c\de\be_\nu 2^{1-2\de\be_\nu} 2^{j(r-\de\be)}),
\end{align*}
which is bounded from below by $1$ for all $j$ sufficiently large.
\end{proof}

The latter lemma is a uniform approximation property of every time by the jumps. It is clear that $A_\de$ is monotone in $\de$,
while the critical value is $\de = 1$ for which the limsup set may or may not cover the semi-axis. As soon as $\de<1$, full
covering occurs. In particular, whenever $\de<1$, for every $t\ge 0$ there exist infinitely many $s_n\in J$ with $|z(s_n)|
\downarrow 0$, such that
\begin{align*}
|t- s_n| \le |z(s_n)|^{\be_\nu\de}.
\end{align*}

For fixed time-points, one might expect an improved inequality to hold, which motivates  the notion of the pointwise approximation
rate defined below.
\begin{definition}
Let $(t_n, r_n) \in \rr^+ \times \rr_*$ be a family of points.  We call
\begin{align}
\label{ineq1}
\de_t = \sup\left\{\de\ge 0:  |t-t_n|\le r_n^{\be_\nu\de} \, \mbox{\rm infinitely often} \right\}
\end{align}
the \emph{approximation rate} of $t\in \rr^+$ by the family of points.
\end{definition}
\noindent
The approximation rate is crucial in investigating the pointwise H\"older exponent of jump processes. By the covering lemma,
for all $t\ge 0$ we have $\de_t\ge 1$, almost surely. The use of this concept will appear clearly in the upper estimate of
$H_M(t)$ below.

\begin{proposition}
\label{upb}
For all $t\ge 0$,
\begin{align*}
H_M(t) \le \frac{1}{\be_\nu\de_t}
\end{align*}
almost surely.
\end{proposition}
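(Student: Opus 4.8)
The plan is to read off the bound from Jaffard's jump lemma (Lemma~\ref{jumplemma}), applied to the c\`adl\`ag coordinate function $t\mapsto M_t$, with the approximation rate $\de_t$ supplying the sequences of jumps that converge to $t$. I would first fix, once and for all, a full-measure event on which three path properties hold simultaneously: $\pro M$ is c\`adl\`ag; the set of jump times of $\pro M$ is dense in $[0,\infty)$ and in fact $\de_t\ge 1$ for every $t$ (this is exactly the covering statement recorded in the discussion following Lemma~\ref{lem: covering subcritical}); and, for every $\e>0$ and every bounded interval, only finitely many jumps of $\pro M$ of size at least $\e$ occur there. The last property holds because such jumps are among the atoms of the restriction of $N$ to a set of the form $[a,b]\times\{\e\le|z|\le 1\}\times[0,1/c]$, the bound $v(s)\le\ph_0(M_{s-}+z(s))/\ph_0(M_{s-})\le 1/c$ coming from \eqref{eq: ratio control}, and this restriction is a Poisson measure of finite total mass by the integrability of $\nu$ away from the origin.

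Next I would unwind the definition of $\de_t$, in which the underlying family of points is $\{(s,|z(s)|):s\in J\}$ with $J$ the set of small-jump times of $\pro M$, so that
\[
\de_t=\sup\Bigl\{\de\ge 0:\ |t-s|\le|z(s)|^{\be_\nu\de}\ \text{for infinitely many }s\in J\Bigr\}.
\]
Since $0<|z(s)|\le 1$ for $s\in J$, the set on the right is a down-set: if the defining property holds for some $\de$ it holds for every $\de'\le\de$, because $|z(s)|^{\be_\nu\de'}\ge|z(s)|^{\be_\nu\de}$. Hence for any $0<\de<\de_t$ there are infinitely many $s_n\in J$ with $|t-s_n|\le|z(s_n)|^{\be_\nu\de}$. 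All such $s_n$ lie in $[t-1,t+1]$, so by the third path property $|z(s_n)|\to 0$ (otherwise infinitely many jumps of size bounded below would occur in a bounded interval), and then $|t-s_n|\le|z(s_n)|^{\be_\nu\de}\to 0$ forces $s_n\to t$.

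Now I would apply Lemma~\ref{jumplemma}: at each $s_n$ the function $t\mapsto M_t$ has a jump of size $|z(s_n)|$, so along the sequence $s_n\to t$ (discarding at most the one index, if any, with $s_n=t$),
\[
H_M(t)\le\liminf_{n\to\infty}\frac{\ln|z(s_n)|}{\ln|t-s_n|}.
\]
For large $n$ both $|t-s_n|$ and $|z(s_n)|$ lie in $(0,1)$, so from $|t-s_n|\le|z(s_n)|^{\be_\nu\de}$ we get $\ln|t-s_n|\le\be_\nu\de\ln|z(s_n)|<0$, and dividing through by the negative number $\ln|t-s_n|$ gives $\ln|z(s_n)|/\ln|t-s_n|\le 1/(\be_\nu\de)$. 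Therefore $H_M(t)\le 1/(\be_\nu\de)$ for every $0<\de<\de_t$, and letting $\de\uparrow\de_t$ (with the convention $1/\infty=0$ when $\de_t=\infty$) yields $H_M(t)\le 1/(\be_\nu\de_t)$. Since the event fixed at the outset has probability one and does not depend on $t$, this holds almost surely for all $t\ge 0$.

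This step is comparatively soft; the only genuine care needed is keeping the almost-sure event uniform in $t$ — which is why the three path properties are isolated before any $t$ is chosen — and noting that the approximation-rate set is a down-set, so that $\de$ may be pushed all the way up to $\de_t$ rather than merely below $1$. The logarithm inequality and the finiteness of the number of large jumps in a bounded interval are routine.
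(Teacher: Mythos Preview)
Your proof is correct and follows essentially the same route as the paper: both apply Jaffard's jump lemma (Lemma~\ref{jumplemma}) to a sequence of jump times $s_n\to t$ furnished by the definition of $\de_t$ (equivalently, membership in $A_\de$ for $\de<\de_t$), and then let $\de\uparrow\de_t$. Your version is more carefully written---you isolate the uniform almost-sure event up front, justify $|z(s_n)|\to 0$ via the finiteness of large jumps in bounded intervals, and spell out the logarithm inequality---but the argument is the same.
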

\begin{proof}
Take any $t\in A_\de$. An application of Lemma \ref{jumplemma} to $\pro M$ and the set of $s_n$ in \eqref{ineq1} implies that $H_M(t)
\le 1/(\be_\nu\de)$. For an arbitrary $t$, we have the following cases. If $\de_t<\infty$, then for any $\e>0$, $t\in A_{\de_t-\e}$,
we have $H_M(t)\le 1/(\be_\nu(\de_{t}-\e))$. Letting $\e\to 0$ gives the result. If $\de_t= \infty$, then $t\in \cap_{\de\ge 1}
A_\de$, and  thus $H_M(t)=0$, which is the claimed upper bound.
\end{proof}

To derive a lower bound, we need to control the increments of the sample paths. An analogue of the following result appears in
\cite{B14} for L\'evy processes, however, since the GST processes have position-dependent increments, we need a substantial
upgrading. For each $n\in \nn$, write
\begin{align*}
Y_n(t) = \int_0^t \int_{ |z|\le 2^{-\frac{n}{\de\be_\nu}}}
\int_0^\infty \indiq_{\bra{v\le \frac{\ph_0(M_{s-}+z)}{\ph_0(M_{s-})}}} z \wt N(\rd s, \rd z, \rd v).
\end{align*}

\begin{lemma}
\label{estimate}
Let $\de>1$.   There exist finite constants $c_1, c_2 > 0$ such that for all $n\in \nn$,
\begin{align*}
\pp\left(\sup_{s,t\in [0,1], \ |s-t|\le 2^{-n}}|Y_n(t)-Y_n(s)| \ge 3 n 2^{-\frac{n}{\de\be_\nu}}  \sqrt{d}\right) \le c_1 e^{-c_2n}.
\end{align*}
\end{lemma}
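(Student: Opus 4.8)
The plan is to establish the tail bound via a chaining/maximal-inequality argument applied to the $L^2$-martingale $Y_n$, combined with a union bound over a suitable net of pairs $(s,t)$. First I would record the quadratic-variation estimate: since $Y_n(t)$ is a compensated Poisson integral with integrand supported on $|z|\le 2^{-n/(\delta\beta_\nu)}$ and truncated by the ground-state ratio (which is bounded above by $1/c$ under \eqref{eq: ratio control}), each coordinate of $Y_n(t)-Y_n(s)$ is a martingale over $[s,t]$ whose predictable quadratic variation is at most $(t-s)\,\tfrac1c\int_{|z|\le 2^{-n/(\delta\beta_\nu)}}|z|^2\nu(z)\rd z$. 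By definition of $\beta_\nu$, for $\delta>1$ we have $\delta\beta_\nu>\beta_\nu$, so $\int_{|z|\le \e}|z|^2\nu(z)\rd z \lesssim \e^{2-r}$ for any $r<\beta_\nu$; choosing $r$ close enough to $\beta_\nu$ one gets that over an interval of length $2^{-n}$ the variance is of order $2^{-n}\cdot 2^{-\frac{n}{\delta\beta_\nu}(2-r)}$, which for $\delta>1$ is $\ll (2^{-n/(\delta\beta_\nu)})^2$ — this slack is exactly what makes the factor $n$ in the statement affordable.

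Next I would discretize. Fix the dyadic net $t_j = j2^{-N}$ with $N$ a large multiple of $n$ (say $N = Kn$ for suitable integer $K$), so that there are polynomially-in-$2^n$ many net points in $[0,1]$. For pairs $s,t$ with $|s-t|\le 2^{-n}$, write $Y_n(t)-Y_n(s)$ as a sum of increments over consecutive net points plus two small end corrections; the number of net intervals spanned is $\le 2^{N-n}+1$. Apply an exponential (Bernstein-type) martingale inequality — e.g. the one for purely-discontinuous martingales with bounded jumps, jumps here being of size $\le 2^{-n/(\delta\beta_\nu)}$, or simply Doob's $L^p$ inequality with $p\sim n$ — to control each single increment $|Y_n(t_{j+1})-Y_n(t_j)|$, getting a bound like $\pp(|Y_n(t_{j+1})-Y_n(t_j)|\ge n2^{-n/(\delta\beta_\nu)}/2)\le e^{-c'n}$ because the typical size is $\sqrt{2^{-N}2^{-\frac{n}{\delta\beta_\nu}(2-r)}}$ which is exponentially smaller than $n2^{-n/(\delta\beta_\nu)}$. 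A union bound over the $\lesssim 2^N = 2^{Kn}$ pairs of adjacent net points then needs $e^{-c'n}$ to beat $2^{Kn}$; this forces $c'$ to be large, i.e. forces us to use exponent $p$ of order $n$ with a large enough constant, which is why the deviation level is $3n2^{-n/(\delta\beta_\nu)}$ rather than a constant multiple. The end corrections are handled the same way since they involve increments over intervals of length $\le 2^{-N}\le 2^{-n}$.

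Finally, assembling: on the complement of the bad event, for every pair $(s,t)$ with $|s-t|\le 2^{-n}$ one bounds $|Y_n(t)-Y_n(s)|$ by summing at most $2^{N-n}+3$ increments each below $n2^{-n/(\delta\beta_\nu)}$ — wait, that sum is too large, so the chaining must be dyadic rather than linear: I would instead bound $|Y_n(t)-Y_n(s)|$ by a telescoping sum over dyadic scales between $2^{-n}$ and $2^{-N}$, i.e. $\sum_{k=n}^{N}\max_j |Y_n((j+1)2^{-k})-Y_n(j2^{-k})|$, with at most one term per scale along the chain from $s$ to $t$, giving $N-n+1 \le (K-1)n$ terms. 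Setting the per-scale threshold at scale $2^{-k}$ to something summable — say proportional to $2^{-(k-n)/2}\,n\,2^{-n/(\delta\beta_\nu)}$ — makes the geometric sum $\le 3n\sqrt d\,2^{-n/(\delta\beta_\nu)}$; the per-scale variance at scale $2^{-k}$ is $\asymp 2^{-k}2^{-\frac{n}{\delta\beta_\nu}(2-r)}$ so the required deviation $2^{-(k-n)/2}n2^{-n/(\delta\beta_\nu)}$ still exceeds the standard deviation by a factor $\gtrsim n$, and the union bound over $\lesssim 2^k\le 2^N$ net points at scale $k$ is absorbed by taking $p\asymp n$. Summing the scale-wise failure probabilities $\sum_{k=n}^N 2^k e^{-cn}$ and choosing $K$ relative to $c$ yields the claimed $c_1e^{-c_2 n}$. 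The main obstacle is bookkeeping the interplay between the number of net points (which grows like $2^N$), the chaining depth $N-n$, and the exponential moment order $p\asymp n$: one must choose $N=Kn$ and the per-scale thresholds so that all three balance, and verify that the $\delta>1$ slack in the quadratic variation is genuinely enough to pay for the factor $n$ and the union bounds simultaneously.
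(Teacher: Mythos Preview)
Your variance heuristic has a sign slip: the bound $\int_{|z|\le\varepsilon}|z|^2\nu(z)\rd z\lesssim \varepsilon^{2-r}$ holds for $r>\be_\nu$ (write $|z|^2=|z|^{2-r}|z|^r$ and use $\int_{|z|\le 1}|z|^r\nu(z)\rd z<\infty$), not for $r<\be_\nu$. The slack you want then requires $r\in(\be_\nu,\de\be_\nu)$, which is non-empty exactly because $\de>1$; with that correction your variance calculation is right, and a chaining along the lines you sketch can presumably be pushed through, though the bookkeeping you flag (balancing $N=Kn$, the per-scale thresholds, and the exponential order) is not yet carried out.

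The paper's route is much shorter and avoids chaining altogether. It partitions $[0,1]$ into the $2^n$ blocks $I_{n,k}=[k2^{-n},(k+1)2^{-n})$, union-bounds, and on each block controls the rescaled process $\wt Y_n(t)=2^{n/(\de\be_\nu)}\big(Y_n(t+k2^{-n})-Y_n(k2^{-n})\big)$ by a direct exponential-moment estimate: apply It\^o's formula to $e^{\wt Y_n\cdot\xi}$ with $\xi=\vec e_i$, use $|e^u-1-u|\le u^2$ (the rescaled jumps satisfy $|2^{n/(\de\be_\nu)}z|\le 1$), and bound the resulting drift by $\tfrac1c\int_{|z|\le 2^{-n/(\de\be_\nu)}}2^{2n/(\de\be_\nu)}|z|^2\nu(z)\rd z\le 2^n\int_{|z|\le 1}|z|^{\de\be_\nu}\nu(z)\rd z$, finite precisely because $\de\be_\nu>\be_\nu$. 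The factor $2^n$ cancels against the block length $2^{-n}$ in Gronwall, giving $\E[e^{\wt Y_n(t)\cdot\xi}]\le e^{1/C_2}$ uniformly in $n$ and $t$; Markov then gives $\le 2d\,e^{1/C_2}e^{-n}$ per block, and summing $2^n$ blocks leaves $c_1e^{-(1-\ln 2)n}$. So a single exponential moment at the right scale replaces your whole tower of Bernstein/$L^p$ estimates, and the only union bound is over $2^n$ blocks rather than $\sum_{k=n}^{Kn}2^k$ dyadic points; the role of $\de>1$ is also more transparent this way.
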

\begin{proof}
Let $I_{n,k} = [k2^{-n}, (k+1)2^{-n})$. Using a dyadic approximation, the required probability can be bounded from above by
\begin{align}\label{eqsum}
\sum_{k=0}^{2^n-1} \pp\left(\sup_{t\in I_{n,k}} |Y_n(t) - Y_n(k 2^{-n})|\ge n   2^{-\frac{n}{\de\be_\nu}} \sqrt{d}\right).
\end{align}
We estimate the sum term by term. For each $k$, consider the semimartingale
$$
\wt Y_n(t) = 2^{\frac{n}{\de\be_\nu}}\left( Y_n(t+ k2^{-n})-Y_n(k2^{-n}\right), \quad t\in I_{n,k}.
$$
Applying It\^o's formula with the map $x\mapsto e^{x\cdot \xi}$, where $\xi =\vec{e_i}$ is the canonical orthonormal basis
of $\rr^d$, we obtain
\begin{eqnarray*}
e^{\wt Y_n(t)\cdot \xi}
&=&
1 +  \int_{k2^{-n}}^t \int_{ |z|\le 2^{-\frac{n}{\de\be_\nu}}}\int_0^{\infty} e^{\wt Y_n(s-)\cdot\xi}
\left(\exp\left( 2^{\frac{n}{\de\be_\nu}}\indiq_{\bra{v\le \frac{\ph_0(M_{s-}+z)}{\ph_0(M_{s-})}}} z \cdot \xi\right)-1\right)
\wt N(\rd s, \rd z, \rd v) \\
&& \qquad
+ \int_{k2^{-n}}^t \int_{ |z|\le 2^{-\frac{n}{\de\be_\nu}}}\int_0^{\infty} e^{\wt Y_n(s-)\cdot\xi}
\left(\exp\left( 2^{\frac{n}{\de\be_\nu}} \indiq_{\bra{v\le \frac{\ph_0(M_{s-}+z)}{\ph_0(M_{s-})}}} z \cdot \xi\right)-1\right.\\
&& \left.
\hspace{5.6cm}   - 2^{\frac{n}{\de\be_\nu}}\indiq_{\bra{v\le \frac{\ph_0(M_{s-}+z)}
{\ph_0(M_{s-})}}} z \cdot \xi \right) \rd v \nu(z)\rd z \rd s,
\end{eqnarray*}
for all $t\in I_{n,k}$.  Define the stopping times $\tau_r= \inf\{ t\in I_{n,k}: |\wt Y_n(t)|\ge r\}$, $r\in\nn$, with the
convention that $\inf \emptyset = \infty$.  By the c\`adl\`ag property of sample paths, $\tau_r\to \infty$ as $r\to\infty$, almost
surely.  Since the stopped compensated Poisson integral is a centered martingale, on taking expectation in the above formula and
using $|e^u-1-u|\le u^2$ for $|u|\le 1$, we get
\begin{align*}
\E[e^{\wt Y_n(t\wedge \tau_r)\cdot\xi}] &\le 1  +  \E\Big[ \int_{k2^{-n}}^{t\wedge\tau_r} \int_{|z|
\le
2^{-\frac{n}{\de\be_\nu}}}\int_0^{\infty} e^{\wt Y_n(s-)\cdot\xi} \indiq_{\bra{v\le \frac{\ph_0(M_{s-}+z)}{\ph_0(M_{s-})}}}
\( 2^{\frac{n}{\de\be_\nu}}z \cdot \xi \) ^ 2 \rd v \nu(z)\rd z \rd s\Big]   \\
&=
1 + \E \Big[  \int_{k2^{-n}}^{t\wedge\tau_r} \int_{|z|\le 2^{-\frac{n}{\de\be_\nu}}} \frac{\ph_0(M_{s-}+z)}{\ph_0(M_{s-})}
e^{\wt Y_n(s-)\cdot\xi} \( 2^{\frac{n}{\de\be_\nu}}z \cdot \xi \) ^ 2  \nu(z)\rd z \rd s\Big].
\end{align*}
Using the upper bound in \eqref{eq: ratio control}, we furthermore obtain
\begin{align*}
\E[e^{\wt Y_n(t\wedge \tau_r)\cdot\xi}]
&\le
1 + \frac{1}{c} \E \Big[  \int_{k2^{-n}}^{t\wedge\tau_r} e^{\wt Y_n(s-)\cdot\xi} \rd s  \int_{|z|\le 2^{-\frac{n}{\de\be_\nu}}}
2^{\frac{2n}{\de\be_\nu}} |z|^ {2-\de\be_\nu} |z|^{\de\be_\nu} \nu(z)\rd z \Big].
\end{align*}
The integral over $z$ in the expectation is bounded above by
\begin{align*}
 2^n \int_{|z|\le 1} |z|^{\de\be_\nu} \nu(z)\rd z = C_1 2^n,
\end{align*}
with a suitable  constant $C_1$, which does not depend on $n$ and is finite since $\de>1$. By Fubini's theorem,
\begin{align*}
\E[e^{\wt Y_n(t\wedge \tau_r)\cdot\xi}] \le 1+\frac{2^n}{c_1}\int_{k2^{-n}}^t \E[e^{\wt Y_n(s \wedge \tau_r)\cdot\xi}] \,\rd s,
\end{align*}
where $C_2 = C_1/c$. Gronwall's lemma yields then
\begin{align*}
\E[e^{\wt Y_n(t\wedge \tau_r)\cdot\xi}] &\le  e^{(t-k2^{-n})2^n/C_2} \le e^{1/C_2},
\end{align*}
for all $t\in I_{n,k}$. Letting $r \to \infty$ and using Fatou's lemma, we get $\E[e^{\wt Y_n(t)\cdot\xi}] \le  e^{1/C_2}$, and
similarly $\E[e^{- \wt Y_n(t)\cdot\xi}] \le  e^{1/C_2}$.  Hence, using that $ {x_1}^2+\cdots+{x_d}^2 \le d\max_i x_i^2$, we
have
\begin{align*}
\E[e^{|\wt Y_n(t)|/\sqrt{d}}] \le \sum_{i=1}^d \E [ e^{|\wt Y_n(t)\cdot \vec{e_i}|  } ] \le 2d e^{1/C_2}.
\end{align*}
To conclude, by the Markov inequality we see that each term in \eqref{eqsum} is bounded from above by $e^{-n} \E[e^{|\wt Y_n(t)|/\sqrt{d}}]$,
which is summable in $n$.
\end{proof}

\begin{remark}
\rm{
This lemma can be extended to any bounded interval. We thus focus on the unit interval $[0,1]$.
}
\end{remark}

We can now prove a lower bound for the H\"older exponent of the compensated Poisson integral
\begin{align*}
Y_t=   \int_0^t \int_{ |z|\le 1}\int_0^\infty \indiq_{\bra{v\le \frac{\ph_0(M_{s-}+z)}{\ph_0(M_{s-})}}}z \wt N(\rd s, \rd z, \rd v).
\end{align*}

\begin{proposition}
For all $t\in[0,1]\setminus J$,
\begin{align*}
H_Y(t) \ge \frac{1}{\de_t\be_\nu},
\end{align*}
almost surely.
\end{proposition}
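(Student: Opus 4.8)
The plan is to prove the matching lower bound to Proposition~\ref{upb} by a dyadic multiscale decomposition of $Y$, controlling the small-jump part through Lemma~\ref{estimate} and the remaining jumps near the fixed time $t$ through the approximation rate $\de_t$. Fix $t\in[0,1]\setminus J$. By the covering Lemma~\ref{lem: covering subcritical} we have $\de_t\ge 1$ almost surely (if $\de_t=\infty$ there is nothing to prove), so it is enough to show that for every rational $\de>\de_t$ one has $Y\in C^{h'}(t)$ for all $h'<1/(\de\be_\nu)$; letting $\de\downarrow\de_t$ then gives $H_Y(t)\ge 1/(\de_t\be_\nu)$. Since $\de>\de_t\ge 1$, Lemma~\ref{estimate} applies with this $\de$.

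For $n\in\NN$ write $Y=Y_n+V_n$, where $Y_n$ is the process of Lemma~\ref{estimate} (the compensated integral over $|z|\le 2^{-n/(\de\be_\nu)}$) and
\begin{align*}
V_n(t)=\int_0^t\int_{2^{-n/(\de\be_\nu)}<|z|\le 1}\int_0^\infty \indiq_{\left\{v\le \frac{\ph_0(M_{s-}+z)}{\ph_0(M_{s-})}\right\}} z\,\wt N(\rd s,\rd z,\rd v)=S_n(t)-C_n(t),
\end{align*}
with $S_n$ the pathwise sum of the retained jumps (a.s.\ finite on $[0,1]$, since $\nu(\{|z|>2^{-n/(\de\be_\nu)}\})<\infty$) and $C_n(t)=\int_0^t g_n(M_{s-})\,\rd s$, $g_n(x)=\int_{2^{-n/(\de\be_\nu)}<|z|\le 1}\frac{\ph_0(x+z)}{\ph_0(x)}z\,\nu(z)\rd z$. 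Two facts are needed. By Lemma~\ref{estimate} and the Borel--Cantelli lemma, almost surely for all large $n$,
\begin{align*}
\sup_{a,b\in[0,1],\ |a-b|\le 2^{-n}}|Y_n(a)-Y_n(b)|\le 3n\sqrt{d}\,2^{-n/(\de\be_\nu)}.
\end{align*}
Moreover, because $\de>\de_t$, only finitely many jump points $(s,z(s))$, $s\in J$, satisfy $|z(s)|\ge |s-t|^{1/(\be_\nu\de)}$; since $t\notin J$ these lie at distance at least some $\e_0>0$ from $t$, so every jump with $0<|s-t|<\e_0$ obeys $|z(s)|<|s-t|^{1/(\be_\nu\de)}$.

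Now fix $0<|h|<\e_0$ and let $n=n(h)$ with $2^{-n-1}<|h|\le 2^{-n}$. Every jump $s$ strictly between $t$ and $t+h$ has $|s-t|<2^{-n}$, hence $|z(s)|<2^{-n/(\be_\nu\de)}=2^{-n/(\de\be_\nu)}$, so it is not retained by $V_n$; thus $S_n(t+h)=S_n(t)$ and $V_n(t+h)-V_n(t)=-\int_t^{t+h}g_n(M_{s-})\,\rd s$. By \eqref{eq: ratio control} and the definition \eqref{eq: BG index} of $\be_\nu$, $\sup_x|g_n(x)|\le \tfrac1c\int_{2^{-n/(\de\be_\nu)}<|z|\le 1}|z|\,\nu(z)\rd z$, which (using a test exponent $\gamma\in(\be_\nu,\de\be_\nu)$) is uniformly bounded in $n$ when $\be_\nu<1$ and at most $O(2^{\theta n})$ with $\theta<1/(\de\be_\nu)$ when $\be_\nu\ge 1$. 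Hence $|V_n(t+h)-V_n(t)|\le \sup_x|g_n|\cdot|h|$ is of order $|h|^{1/(\de\be_\nu)}$ whenever $\de_t\be_\nu\ge 1$, and combining this with the previous display and $2^{-n}\asymp |h|$,
\begin{align*}
|Y_{t+h}-Y_t|\le |Y_n(t+h)-Y_n(t)|+|V_n(t+h)-V_n(t)|\le C\,|h|^{1/(\de\be_\nu)}\log(1/|h|)
\end{align*}
for all sufficiently small $|h|$, so that $Y\in C^{h'}(t)$ (with constant polynomial $Y_t$) for every $h'<1/(\de\be_\nu)$, and the conclusion follows.

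I expect the main difficulty to be the case $\be_\nu<1$ with $\de_t\be_\nu<1$, where $1/(\de_t\be_\nu)>1$: there the crude bound $|V_n(t+h)-V_n(t)|\le \sup_x|g_n|\cdot|h|$ only yields the order $|h|$, and must be sharpened by expanding $\int_t^{t+h}g_n(M_{s-})\,\rd s$ as a polynomial in $h$ (of degree up to $\lfloor 1/(\de_t\be_\nu)\rfloor$) plus a higher-order remainder; this forces one to quantify the regularity of the compensator $C_n$, hence of $\ph_0$ and of the paths $s\mapsto M_{s-}$, near $t$ — exactly the feature absent in the classical L\'evy setting of \cite{B14}, and the reason for the extra smoothness hypothesis on $\ph_0$ in Theorem~\ref{spectrum}(2)(ii).
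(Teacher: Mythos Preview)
Your argument is essentially the paper's own proof: the same dyadic decomposition $Y=Y_n+V_n$, the same application of Lemma~\ref{estimate} with Borel--Cantelli for the small-jump piece, and the same observation that for $\de>\de_t$ no jump of size $\ge 2^{-n/(\de\be_\nu)}$ survives in $[t,t+h]$, so that $V_n$ reduces to its (absolutely continuous) compensator, which is then bounded by $|h|\sup_x|g_n(x)|$. Your final paragraph is a genuine observation the paper does not spell out: the compensator bound only yields the order $|h|^{1/(\de\be_\nu)}$ when $\de\be_\nu\ge 1$; indeed the paper's displayed inequality $\int_{|z|\ge 2^{-n/(\de\be_\nu)}}|z|\,\nu(z)\rd z\le (2^{-n/(\de\be_\nu)})^{1-\de\be_\nu}\int_{|z|\le 1}|z|^{\de\be_\nu}\nu(z)\rd z$ uses $|z|^{1-\de\be_\nu}\le r^{1-\de\be_\nu}$ for $|z|\ge r$, which requires $\de\be_\nu\ge 1$. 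This does not harm the paper's main theorems --- when $\sigma\neq 0$ or $\be_\nu\ge 1$ one always has $\de\be_\nu>1$, and in the remaining case $\be_\nu<1$ Theorem~\ref{spectrum}(2)(ii) invokes exactly the extra smoothness of $\ph_0$ you identify --- but your caveat makes the scope of the proposition precise. One small point: your growth claim ``$\sup_x|g_n|=O(2^{\theta n})$ with $\theta<1/(\de\be_\nu)$'' should read $\theta\le 1-1/(\de\be_\nu)$ (take $\gamma\uparrow\de\be_\nu$); this is what is actually needed for $|h|\cdot 2^{\theta n}\lesssim|h|^{1/(\de\be_\nu)}$.
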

\begin{proof}
The Borel-Cantelli lemma combined with Lemma \ref{estimate} give that for all $n$ larger than a suitable $n_0\in\nn$,
\begin{align*}
\sup_{s,t\in[0,1], \, |s-t|\le 2^{-n}}|Y_n(t)-Y_n(s)| \le 3\sqrt{d} n 2^{-\frac{n}{\de\be_\nu}}, \quad \mbox{a.s.}
\end{align*}
Fix a point of continuity $t\in[0,1]\setminus A_\de$. Let $s$ be close enough to $t$ so that for some $n<n_0$,
$$
2^{-n-1} < |t-s| \le 2^{-n}.
$$
Then
\begin{align*}
|Y_n(t)-Y_n(s)| \le 6\sqrt{d} \log\( \frac{1}{|s-t|}\) |t-s|^{\frac{1}{\de\be_\nu}}.
\end{align*}
Enlarging the value of $n_0$ if necessary, we see that $t\notin A_\de$ implies that any jump $s_p\in [s,t]$ satisfies
\begin{align*}
2^{-n}\ge |s-t| \ge |s_p-t|\ge |z(s_p)|^{\de\be_\nu},
\end{align*}
i.e., there are no jumps at the time-points $s_p\in J \cap [s,t]$ of size $|z(s_p)|\ge 2^{-\frac{n}{\de\be_\nu}}$. Hence,
\begin{eqnarray*}
\lefteqn{
\abs{ \int_s^t \int_{1\ge |z|\ge 2^{-\frac{n}{\de\be_\nu}}}\int_0^\infty
\indiq_{\bra{v\le \frac{\ph_0(M_{s-}+z)}{\ph_0(M_{s-})}}} z \wt N(\rd s, \rd z, \rd v) } } \\
&\qquad  =&
\abs{ \int_s^t \int_{1\ge |z|\ge 2^{-\frac{n}{\de\be_\nu}}}
\int_0^\infty \indiq_{\bra{v\le \frac{\ph_0(M_{s-}+z)}{\ph_0(M_{s-})}}} z \rd v\nu(z)\rd z \rd s } \\
& \qquad \le&
|s-t| \, \frac{\ph_0(M_{s-}+z)}{\ph_0(M_{s-})} \, \int_{1\ge |z|\ge 2^{-\frac{n}{\de\be_\nu}}} |z|\nu(z)\rd z.
\end{eqnarray*}
The integral over $z$ is bounded above by
\begin{align*}
 (2^{-\frac{n}{\de\be_\nu}})^{1-\de\be_\nu}\int_{|z|\le 1} |z|^{\de\be_\nu} \nu(z)\rd z \le  C |s-t|^{\frac{1}{\de\be_\nu} - 1},
\end{align*}
with a constant $C > 0$. Combining these estimates, we get
\begin{align*}
|Y_s-Y_t|
&\le |Y_n(t)-Y_n(s)| + \abs{ \int_s^t \int_{1\ge |z| \ge 2^{-\frac{n}{\de\be_\nu}}}
\int_0^\infty \indiq_{\bra{v\le \frac{\ph_0(M_{s-}+z)}{\ph_0(M_{s-})}}} z \wt N(\rd s, \rd z, \rd v) } \\
&\le c |t-s|^{\frac{1}{\de\be_\nu}} \log\frac{1}{|s-t|},
\end{align*}
where $c$ is a finite constant dependent on $M$ and $d$. Hence, almost surely, for all rational $\de>1$
we have $H_Y(t)\ge 1/(\de\be_\nu)$ at all times of continuity $t\in[0,1]\setminus A_\de$. By the definition of
$\de_t$, it is seen that $H_Y(t)\ge 1/(\de_t\be_\nu)$, for all continuity points $t\in[0,1]$, almost surely.
\end{proof}

\begin{remark}
\rm{
Using the argument in the proof of  Proposition \ref{upb}, we can similarly show $H_Y(t)\le 1/(\de_t\be_\nu)$ for
all $t$.
}
\end{remark}

\begin{theorem}
\label{th: holder exponent}
Under the assumptions of Theorem 1.3, for all times of continuity $t$,
\begin{align*}
 H_M(t) = \begin{cases} \frac{1}{\de_t\be_\nu}\wedge \frac{1}{2} & \mbox{ if } \sigma\neq 0 \\  \\
 \frac{1}{\de_t\be_\nu} & \mbox{ if } \sigma=0
 \end{cases}
\end{align*}
almost surely.
\end{theorem}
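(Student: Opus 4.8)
The plan is to read off the pointwise Hölder exponent of each term in the semimartingale decomposition \eqref{sde} and to assemble them via the elementary superposition rule: for $f=f_1+f_2$ one has $H_f(t)\ge H_{f_1}(t)\wedge H_{f_2}(t)$, with equality whenever $H_{f_1}(t)\ne H_{f_2}(t)$. Write $M_t = M_0 + D_t + \sigma B_t + Y_t + L_t$, where $D_t=\int_0^t b(M_s)\rd s$ is the drift in \eqref{sde}, $Y_t$ the compensated small-jump integral, and $L_t$ the large-jump integral. At a time $t$ where the path is continuous I would first record the following. The process $L$ has a.s. finitely many jumps on each bounded interval and none at $t$, so it is locally constant there and $H_L(t)=+\infty$. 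The coefficient $b$ is locally bounded --- by Assumption \ref{assumpdrift}, the boundedness and strict positivity of $\ph_0$, and $\int_{|z|\le1}\abs{\ph_0(x+z)-\ph_0(x)}\abs{z}\nu(z)\rd z\lesssim\int_{|z|\le1}|z|^2\nu(z)\rd z<\infty$, exactly as in the proof of Theorem \ref{uniqueness} --- so $D$ is locally Lipschitz and $H_D(t)\ge1$. For $\sigma\ne0$ the uniform and local moduli of continuity of Brownian motion give $H_{\sigma B}(t)=1/2$, while for $\sigma=0$ this term is absent. Finally $H_Y(t)=1/(\de_t\be_\nu)$: the lower bound is the proposition proven above for the compensated jump part (built on Lemma \ref{estimate} and the covering Lemma \ref{lem: covering subcritical}), and the matching upper estimate was noted there as well, or follows from Proposition \ref{upb}.

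Combining these through the superposition rule, for $\sigma\ne0$ I get $H_M(t)\ge 1\wedge\tfrac12\wedge\tfrac1{\de_t\be_\nu}=\tfrac12\wedge\tfrac1{\de_t\be_\nu}$, and this is an equality except possibly on the borderline $1/(\de_t\be_\nu)=1/2$, where the superposition rule gives only ``$\ge\tfrac12$'' and I would close the gap with the upper bound $H_M(t)\le 1/(\de_t\be_\nu)$ from Proposition \ref{upb}. For $\sigma=0$ and $\be_\nu\in[1,2]$ the reasoning is the same: since $\de_t\ge1$ one has $1/(\de_t\be_\nu)\le1/\be_\nu\le1\le H_D(t)$, so the compensated jump part is the rough term, $H_M(t)=1/(\de_t\be_\nu)$, and the case $\de_t\be_\nu=1$ is again settled by Proposition \ref{upb}.

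The genuinely different case is $\sigma=0$ with $\be_\nu\in(0,1)$, where $1/(\de_t\be_\nu)$ can exceed $1$ --- up to $1/\be_\nu$ --- so the merely Lipschitz drift $D$ would seem to cap $H_M(t)$ at $1$. This is an artefact of the compensated form of \eqref{sde}: since $\int_{|z|\le1}|z|\nu(z)\rd z<\infty$ and $\nu$ is rotationally symmetric, $\int_{|z|\le1}z\,\nu(z)\rd z=0$, the compensator of the small jumps coincides with $D$, and \eqref{sde} rewrites as $M_t=M_0+Y'_t+L_t$ with $Y'_t=\sum_{s\le t,\,|z(s)|\le1}z(s)$ the genuine small-jump sum. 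I would then prove the sharp lower bound $H_{Y'}(t)\ge 1/(\de_t\be_\nu)$ directly, and I expect this to be the main obstacle: it is obtained by iterating the exponential-moment argument behind Lemma \ref{estimate}, now for $Y'$, in which the new first-order term created by the absent compensator, of the form $2^{n/(\de\be_\nu)}\int_{|z|\le 2^{-n/(\de\be_\nu)}}(z\cdot\xi)\tfrac{\ph_0(M_{s-}+z)}{\ph_0(M_{s-})}\nu(z)\rd z$, is controlled by Taylor-expanding $x\mapsto\ph_0(x+z)/\ph_0(x)$ about $x=M_{s-}$: the rotational symmetry of $\nu$ annihilates all odd-order moments, and the remainder, estimated against $\int_{|z|\le1}|z|^{\de\be_\nu}\nu(z)\rd z$, is negligible precisely when the expansion is carried to order $k$ with $\ph_0\in C^{k+1}$ and $k\ge1/\be_\nu$. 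The upper bound $H_{Y'}(t)\le 1/(\de_t\be_\nu)$ follows, as before, from Lemma \ref{jumplemma} and the definition of $\de_t$, a single genuine jump of size $\asymp|s-t|^{1/(\de_t\be_\nu)}$ near $t$ being incompressible. Together with $H_L(t)=+\infty$ this yields $H_M(t)=1/(\de_t\be_\nu)$.

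Lastly, all of the inputs --- the covering lemma, Lemma \ref{estimate}, Proposition \ref{upb}, the lower bounds for $Y$ and $Y'$ --- hold almost surely simultaneously for every continuity time, and the superposition rule is pathwise, so the stated identity for $H_M(t)$ holds almost surely for all continuity times at once. Throughout, \eqref{eq: ratio control} is in force; its removal by localization is dealt with afterwards.
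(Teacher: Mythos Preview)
Your handling of the cases $\sigma\neq 0$ and $\sigma=0$ with $\be_\nu\in[1,2]$ matches the paper's argument essentially verbatim: superposition rule, $H_{\sigma B}\equiv 1/2$, $H_D\ge 1$, $H_L=+\infty$, $H_Y(t)=1/(\de_t\be_\nu)$, and the borderline closed by Proposition~\ref{upb}.

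For $\sigma=0$, $\be_\nu\in(0,1)$ the paper takes a different and shorter route. It keeps the compensated decomposition $M=M_0+D+Y+L$ and disposes of the drift via the general composition fact: if $F(t)=\int_0^t g(f(s))\,\rd s$ with $g\in C^k(\RR^d)$ and $k\ge H_f(t)$, then $H_F(t)\ge 1+H_f(t)$. Applying this with $f=M$, $g=b$, and using $H_M(t)\le 1/\be_\nu$ from Proposition~\ref{upb}, gives $H_D(t)\ge 1+H_M(t)>H_M(t)$, which forces $H_M(t)=H_Y(t)$ by the superposition rule. The hypothesis $\varphi_0\in C^{k+1}$, $k\ge 1/\be_\nu$, enters exactly to ensure $b\in C^k$.

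Your alternative --- passing to the uncompensated form $M=M_0+Y'+L$ (the drift indeed cancels the compensator when $\sigma=0$, by symmetry of $\nu$) and re-running the exponential-moment argument for $Y'$ --- is a legitimate strategy, but your account of where the $C^{k+1}$ regularity is consumed is garbled. The expansion should be in $z$, not ``in $x$ about $M_{s-}$''; and in fact only one cancellation is needed: by symmetry $\int_{|z|\le r}(z\cdot\xi)\,\nu(z)\,\rd z=0$, so the new first-order term becomes
\[
r^{-1}\int_{|z|\le r}(z\cdot\xi)\,\frac{\varphi_0(x+z)-\varphi_0(x)}{\varphi_0(x)}\,\nu(z)\,\rd z
= O\!\left(r^{-1}\int_{|z|\le r}|z|^2\,\nu(z)\,\rd z\right),
\]
using only the local Lipschitz bound on $\varphi_0$ from Assumption~\ref{assumpdrift}. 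For $\be_\nu<1$ the right-hand side is uniformly bounded in $r$, which is already enough for the Gronwall step. So your route, argued correctly, appears to avoid the extra smoothness assumption that the paper's composition lemma needs --- an observation you did not make, and your stated reason for requiring $\varphi_0\in C^{k+1}$ does not hold up.
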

\begin{proof}
We distinguish three situations according to the matrix $\sigma$ and the value of $\be_\nu$.

\vspace{0.1cm}
\noindent
\emph{Case 1}: Let $\sigma\neq 0$ and $\be_\nu\in(0,2]$. Recall that for any $f,g: \RR^+\to \RR^d$ locally bounded
functions $H_{f+g}(t)\ge \min(H_f(t), H_g(t)$, where equality holds when the H\"older exponents of $f$ and $g$
are different at $t$.  Since the H\"older exponent of Brownian motion is $1/2$ everywhere and the drift terms
are differentiable at every $t$ (necessarily their H\"older exponent is larger or equal to $1$), we see that
the sum of Brownian motion and the two drifts has H\"older exponent equal to $1/2$ everywhere. The uncompensated
Poisson integral is locally constant, thus it does not influence the local regularity of $\pro M$ except on the
set of jump times (finite in any bounded interval). The compensated Poisson integral has H\"older exponent
$1/(\de_t\be_\nu)$ at any point of continuity $t$.  The claimed formula follows if $1/(\de_t\be_\nu)\neq 1/2$,
otherwise $1/2$ is a straightforward lower bound for $H_M(t)$, and it is also an upper bound due to Lemma
\ref{jumplemma}. Thus the identity follows.

\vspace{0.1cm}
\noindent
\emph{Case 2}: Let $\sigma=0$ and $\be_\nu\in [1,2]$. In this case, $1/(\de_t\be_\nu)\le 1$, since $\de_t\ge 1$ for all
$t$, i.e., the drifts are all smoother than the compensated Poisson integral. The result follows.

\vspace{0.1cm}
\noindent
\emph{Case 3}: Let $\sigma=0$ and $\be_\nu\in(0,1)$. Our assumption implies that the drift terms are smoother than the
compensated Poisson integral. To see this, note that for any locally bounded $f: \RR^+ \to \RR^d$, $g:\RR^d\to \RR^d$,
with $F(t)= \int_0^t g(f(s))\rd s$, we have that whenever $g\in C^k(\RR^d)$ with $k\ge H_f(t)$, it follows that
$H_F(t)\ge 1 + H_f(t)$. In particular, we have $H_F(t)> H_f(t)$. Applying this to $f=M$ and with $g$ chosen to be the
drift coefficient in \eqref{sde}, combined with the fact that $H_M(t)\le 1/(\de_t\be_\nu)\le 1/\be_\nu$, yields
$H_M(t)=H_Y(t)$, as claimed.

\vspace{0.1cm}
To complete the proof, in a concluding step we remove condition \eqref{eq: ratio control}.  Let
\begin{align*}
\Omega_{K,b} = \left\{\omega\in\Omega:  \sup_{t\le b} |M_t(\omega)| \le K \right\}.
\end{align*}
The c\`adl\`ag properties of the sample paths imply that $\pp(\Omega_{K,b})\to 1$ as $K \to \infty$.  Assumption
\ref{assump}
implies that the two-sided inequality in \eqref{eq: ratio control} holds uniformly for $|z|\le 1$, $|x|\le K$, for every
$K\in\nn_*$, with $c$ dependent on the value of $K$. For paths in $\Omega_{K,b}$, we have shown the result above. Letting
$K\to \infty$, then $b\to\infty$ completes the proof.
\end{proof}

\subsubsection{Proof of Theorem \ref{spectrum}: multifractal spectrum}
\label{sec: theo_spec}

We determine the  multifractal spectrum under condition \eqref{eq: ratio control}; the extension to the general situation
can be done as at the end of the last subsection.

Note that by Theorem \ref{th: holder exponent} it suffices to consider $h\in [0,1/\be_\nu]$. For every such $h$, we have that
\begin{align}
E_M(h)
&= \bra{t\ge 0:  \de_t= \frac{1}{h\be_\nu}} \setminus J =
\( \bigcap_{\al<1/(h\be_\nu)} {A_\al}\) \setminus  \( \bigcup_{\al>1/(h\be_\nu)} A_\al  \) \setminus J \nonumber\\
&= \( \bigcap_{n \geq 1} A_{1/(h\be_\nu)- 1/n}  \)  \setminus  \( \bigcup_{n \neq 1} A_{1/(h\be_\nu)+1/n}  \) \setminus J.
\label{eq: E_M(h)}
\end{align}

First we give an upper bound on the Hausdorff dimension of the family of sets $\{A_\de, \de\ge 1\}$. Observe that for any $j_0$,
\begin{align*}
A_\de \subset \bigcup_{j\ge j_0} \bigcup_{s\in J \atop 2^{-j-1}\le |z(s)|< 2^{-j}} (s-|z(s)|^{\be_\nu\de}, s+|z(s)|^{\be_\nu\de}).
\end{align*}
We can use these intervals as a covering system of $A_\de$. It suffices to show that for every $s> 1/\de$, almost surely,
\begin{align} \label{eq: up bound for spec}
\sum_{j\ge j_0} (2^{-j\be_\nu\de })^s N([0,1]\times [0,1/c]\times \{z: 2^{-j-1}\le |z|< 2^{-j}\}) < \infty,
\end{align}
 where $c$ is the constant in \eqref{eq: ratio control}. This implies that for all $\de\ge 1$ we have $\dimh A_\de\le 1/\de$,
 almost surely.

Next we  prove \eqref{eq: up bound for spec}.  Note that $N_j:=N([0,1]\times [0,1/c]\times \{z: 2^{-j-1}\le |z|< 2^{-j}\})$
is a Poisson random variable with parameter $C_j/c$, where $C_j = \int_{2^{-j-1}<|z|\le 2^{-j}}\nu(z)\rd z$ as in the previous
section.  Let $r\in (\be_\nu, \be_\nu\de s)$. Then by the  definition of $\be_\nu$ we have $C_j\le 2^{jr}$, for all $j$ large
enough. Hence by the Markov inequality,
 \begin{align*}
  \PP(N_j\ge 2\cdot 2^{jr}) \le \PP(|N_j-C_j|\ge 2^{jr}) \le 2^{-jr}.
 \end{align*}
It then follows by the Borel-Cantelli lemma that $N_j\le 2^{jr}$ almost surely, for all $j$ sufficiently large.  The convergence
of the series follows, since we choose $r < \be_\nu\de s$.

The above combined with \eqref{eq: E_M(h)} implies that $\dimh E_M(h)\le \frac{1}{\be_\nu\de}$. To obtain a lower bound on
the spectrum, we make use of the following general result; for a proof see \cite{J00}. Let $|\cdot|$ denote Lebesgue measure
in $\RR$.

\begin{theorem}
Let $(\la_n, \varepsilon_n)$ be a family of points, with $\la_n\in [0,1]$ and $\varepsilon_n>0$. Define $G_\de =
\limsup_{n\to\infty} (\la_n-\varepsilon_n^\de, \la_n +\varepsilon^\de)$.  If $|G_1|=1$, then for all $\de\ge 1$
$$
\mathcal{H}^{\phi_\de}(G_\de)>0,
$$
where $\phi_\de(x)= x^{1/\de} |\log x|^2$, and $\mathcal{H}^{\phi_\de}(E)$ is the Hausdorff measure of the set $E$ with respect
to the gauge function $\phi_\de$.
\end{theorem}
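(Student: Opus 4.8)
\medskip
\noindent
\emph{Proof strategy.}
This is a statement about $\limsup$-sets in $\R$ of ``mass transference'' type, and the cleanest route is to deduce it from the Beresnevich--Velani mass transference principle: applied to the balls $B_n = (\la_n-\varepsilon_n^\delta, \la_n+\varepsilon_n^\delta)$ (whose radii we may assume tend to $0$, the alternative being trivial) with the dimension function $\phi_\delta$, the principle asserts $\mathcal{H}^{\phi_\delta}(I \cap \limsup_n B_n) = \mathcal{H}^{\phi_\delta}(I)$ for every interval $I$, provided $x\mapsto x^{-1}\phi_\delta(x)$ is monotone near $0$ and $\limsup_n B\bigl(\la_n, \phi_\delta(\varepsilon_n^\delta)\bigr)$ has full Lebesgue measure. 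The monotonicity is a one-line computation ($x^{-1}\phi_\delta(x) = x^{1/\delta-1}|\log x|^2$ is decreasing near $0$ when $\delta \ge 1$), and the full-measure hypothesis holds because $\phi_\delta(\varepsilon_n^\delta) = \delta^2\varepsilon_n|\log\varepsilon_n|^2 \ge \varepsilon_n$ for small $\varepsilon_n$, so $\limsup_n B\bigl(\la_n, \phi_\delta(\varepsilon_n^\delta)\bigr) \supseteq \limsup_n(\la_n-\varepsilon_n, \la_n+\varepsilon_n) = G_1$, which has full measure by hypothesis. Taking $I$ to be an interval containing all the $\la_n$ and noting $\mathcal{H}^{\phi_\delta}(I)\ge |I|>0$ (since $\phi_\delta(x)\ge x$ for small $x$) then yields $\mathcal{H}^{\phi_\delta}(G_\delta)>0$.

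Alternatively --- and this is presumably the route of the cited reference, which predates the mass transference principle --- one argues by a direct Cantor-set construction together with the mass distribution principle (if a set carries a finite Borel measure $\mu$ of positive total mass with $\mu(B(x,r))\le C\phi_\delta(r)$ for all $x$ and all small $r$, then its $\mathcal{H}^{\phi_\delta}$-measure is at least $(\text{mass})/C$). The construction rests on a single-scale selection step: since $G_1$ has full measure in $[0,1]$, for every interval $I$ and every $\rho>0$ the intervals $(\la_n-\varepsilon_n,\la_n+\varepsilon_n)$ with $\varepsilon_n<\rho$ and $(\la_n-2\varepsilon_n,\la_n+2\varepsilon_n)\subseteq I$ form a Vitali cover of $G_1\cap\mathrm{int}(I)$, so the Vitali covering lemma produces finitely many pairwise disjoint ones whose union has measure $\ge\tfrac34|I|$, and --- performing the selection in several rounds over dyadic bands of radii --- one arranges all chosen radii comparable to a common scale $\ell_I<\rho$. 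Starting from $\{[0,1]\}$ and iterating (with a threshold $\rho_{m+1}$ at stage $m+1$, to be fixed), replacing each selected $(\la_n-\varepsilon_n,\la_n+\varepsilon_n)$ by the closed shrunk interval $[\la_n-\tfrac12\varepsilon_n^\delta,\la_n+\tfrac12\varepsilon_n^\delta]$, one gets nested finite families $\mathcal{G}_m$ of disjoint closed intervals; their intersection $K$ is a compact subset of $G_\delta$ once $\rho_m\downarrow 0$, since each point of $K$ then lies in shrunk intervals $(\la_n-\varepsilon_n^\delta,\la_n+\varepsilon_n^\delta)$ with $\varepsilon_n\to 0$. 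The measure $\mu$ is defined by splitting each parent's mass among its children in proportion to the underlying lengths $2\varepsilon_n$.

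The technical heart, and the main obstacle, is the uniform estimate $\mu(B(x,r))\le C\phi_\delta(r)$. Unwinding the mass recursion and using that the children of a given interval cover a fixed fraction of it gives $\mu(J)\le\Theta_m|J|^{1/\delta}$ for $J\in\mathcal{G}_m$, with $\Theta_m$ depending only on $\mathcal{G}_0,\dots,\mathcal{G}_{m-1}$; combining this with the disjointness of the underlying full-size intervals (which bounds the total $2\varepsilon_n$-length of children meeting a given ball) yields, for $r$ in the range adapted to generation $m$, a bound $\mu(B(x,r))\le C\Theta_m r^{1/\delta}$. Since such $r$ does not exceed the $m$-th scale $\rho_m^\delta$, one has $|\log r|\ge\delta|\log\rho_m|$, so choosing $\rho_m$ at stage $m$ small enough that also $\rho_m^\delta<\min_{J\in\mathcal{G}_{m-1}}|J|$ (forcing the generation scales to separate and making the ``generation adapted to $r$'' well defined) and $|\log\rho_m|^2\ge\Theta_m$ turns the bound into $\mu(B(x,r))\le C|\log r|^2 r^{1/\delta}=C\phi_\delta(r)$. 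This is precisely where the logarithmic correction in $\phi_\delta$ is used: it absorbs the multiplicative losses accumulated over the generations and the slack caused by $r$ falling between two widely separated consecutive scales. Once this estimate is established --- including the bookkeeping for intermediate radii and for a ball straddling two members of $\mathcal{G}_m$ (controlled because distinct members of $\mathcal{G}_m$ are separated by at least half of the larger of their full-size lengths, again by disjointness) --- the mass distribution principle gives $\mathcal{H}^{\phi_\delta}(K)>0$, and $K\subseteq G_\delta$ completes the argument.
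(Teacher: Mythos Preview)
Your proposal is sound, but note that the paper does not actually prove this theorem: it states it as a known result and refers the reader to Jaffard's paper on lacunary wavelet series for a proof. So there is no ``paper's own proof'' to compare against here.

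That said, both routes you outline are correct. The second one --- the Cantor-type construction plus mass distribution principle, with the logarithmic correction in $\phi_\delta$ absorbing the geometric losses accumulated across generations --- is indeed the argument in the cited reference, and your sketch captures its essential mechanics accurately (Vitali selection inside each parent interval, shrinking by the exponent $\delta$, recursive mass splitting, and the key choice of $\rho_m$ so that $|\log \rho_m|^2$ swallows the constant $\Theta_m$). Your first route via the Beresnevich--Velani mass transference principle is a genuinely different and more economical argument that postdates the cited reference; the only point to be slightly careful about is localisation, since the centres $\lambda_n$ lie in $[0,1]$ and the global hypothesis of the principle (full Lebesgue measure in \emph{every} ball) fails far from $[0,1]$. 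This is harmless: the condition $|G_1|=1$ forces $G_1$ (and hence the larger set $\limsup_n B(\lambda_n,\phi_\delta(\varepsilon_n^\delta))$) to have full measure in every subinterval of $[0,1]$, and the local form of the principle then gives $\mathcal{H}^{\phi_\delta}(G_\delta\cap[0,1])=\mathcal{H}^{\phi_\delta}([0,1])\ge 1$, which is all that is needed.
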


In order to apply the above result, one would need to prove the almost-covering $|A_1\cap [0,1]|=1$ at critical index $\de=1$,
which holds for regular L\'evy measures such as the isotropic $\al$-stable case with $\nu(z)= |z|^{-\al-d}$.  However, for
some ill-behaved L\'evy measures the situation $|A_1\cap [0,1]|<1$ may occur.  To overcome this, we use a trick from
\cite[Prop. 3.2]{X15} to construct a family of limsup sets $A^*_\de$ embedded in $A_\de$, in the sense that for every $\de'< \de$,
\begin{align}\label{eq: inclusion}
A_\de\subset A^*_\de\subset A_{\de'}
\end{align}
satisfying
\begin{align}\label{eq: A*1cover}
|A^*_1\cap[0,1]|=1.
\end{align}
Using \eqref{eq: inclusion}, we can express $E_M(h)$ in \eqref{eq: E_M(h)} with all $A_\de$ replaced by $A^*_\de$. A use of
\eqref{eq: A*1cover} then implies that $\cH^{\phi_\de} (A^*_\de)>0$ almost surely, for all $\de \ge 1$. Recalling that $\dimh
A^*_\de\le 1/\de$, we have $\cH^{\phi_{1/(h\be_\nu)}} (A^*_{1/(h\be_\nu)+1/n})=0$, which gives that $\cH^{\phi_{1/(h\be_\nu)}}
(E_M(h))>0$. This proves that  $D_M(h) \ge \be_\nu h$ almost surely, {simultaneously} for all $0\le h\le 1/\be_\nu$, as required.

To complete the argument, it remains to construct the sets $A^*_\de$ satisfying \eqref{eq: inclusion}-\eqref{eq: A*1cover}.
For all integers $m<n\le \infty$, let
\begin{align*}
A^{m,n}_\de = \bigcup_{s\in J \atop 2^{-n}\le |z(s)|< 2^{-m}} (s- |z(s)|^{\be_\nu\de}, s+ |z(s)|^{\be_\nu\de}).
\end{align*}
Set $m_1=1$. Due to Lemma \ref{lem: covering subcritical}, there exists $m_2>m_1$ such that 
$$
[0,1]\subset A_{1-\frac{1}{2}}\subset A^{m_1,\infty}_{1-\frac{1}{2}} \quad \mbox{and} \quad |A^{m_1,m_2}_{1-\frac{1}{2}}|\ge 
\frac{1}{2}.
$$ 
Similarly, there exists $m_3>m_2$ such that 
$$[0,1]\subset A_{1-\frac{1}{3}}\subset A^{m_2,\infty}_{1-\frac{1}{3}} \quad \mbox{and} \quad |A^{m_2,m_3}_{1-\frac{1}{3}}|\ge 
1-\frac{1}{3}.
$$
We define a sequence $(m_j)_{j\ge 1}$ inductively such that for all $j\ge 2$, $|A^{m_{j-1},m_j}_{1-1/j}|\ge 1-1/j$. Hence,
\begin{align*}
 \big|\limsup_{j\to\infty} A_{1-1/j}^{m_{j-1}, m_j} \cap [0,1] \big| \ge \limsup_{j\to\infty} \big|A_{1-1/j}^{m_{j-1}, m_j}
 \cap [0,1]\big| = 1.
\end{align*}
Define $A^*_\de = \limsup_{j\to\infty} A^{m_j,m_{j+1}}_{\de(1-1/j)}$; then the above formula shows \eqref{eq: A*1cover}. To
show property \eqref{eq: inclusion}, note that $A_\de = \limsup_{j\to\infty} A^{m_j,m_{j+1}}_{\de}$, and the first inclusion
then follows. The second inclusion also holds since for every $\de'<\de$ we have $\de'<\de(1-1/j)$, for all sufficiently large
$j$. This completes the proof.

\subsection{Concluding remarks}
As seen from the proof, the only case when we require some extra smoothness condition for the ground state is when the
Blumenthal-Getoor index is $\be_\nu<1$ and $\sigma=0$. As said in Remark \ref{gsreg} this is
known to hold in some cases, and it can be expected further to hold more widely.

We conjecture that the multifractal nature of a GST process will change if the ground state is less regular and
$\nabla\ln\ph_0$ is $C^\e$, with $1+\ep<1/\be_\nu$.  To see this, consider a simple representation for a GST
process when $\be_\nu<1$ and $\sigma=0$.  In such cases the process has finite variation, thus the compensated Poisson integral
can be decomposed in a difference of an uncompensated Poisson integral and a drift term. More precisely, the GST
process is a weak solution of the simple SDE with jumps
\begin{align*}
M_t = M_0+ \int_0^t b(M_s) \rd s + \int_0^t \int_{\RR^d_*} z N(\rd s, \rd z),
\end{align*}
where $N$ is a Poisson measure with intensity $\rd t\nu(z)\rd z$, and $b(x)= \nabla\ln\ph_0(x)+
(\int_{|z|\le 1} z\nu(z)\rd z)x$ is the drift coefficient. Recall that the H\"older exponent of the pure jump L\'evy
term is equal to $1/(\de_t\be_\nu)$.

Define the point processes
\begin{align*}
 p &= \{ (s, z(s)); s\in D \} \\
\wt p &= \{ (s, r(s)); s\in D \} \;\;\mbox{with}\;\; r(s) = b(M_{s-}+z(s))-b(M_{s-}),
\end{align*}
and the associated approximation rates $\de_t$ (for $p$) and $\wt\de_t$ (for $\wt p$) as in \eqref{ineq1}. The H\"older
exponent of the drift term will depend on $\wt p$. Indeed, for the process $b(M_t)=G_t$ Lemma \ref{jumplemma} implies
$H_G(t) \le 1/(\be_\nu\wt\de_t)$.  When $b$ is only $C^\e$, the jump size $r(s)\le z(s)^\e$. Therefore, $H_G(t)\le
\e/(\be_\nu\de_t)$ whenever $r(s)\asymp z(s)^\e$ occurs for infinitely many $s$ tending to $t$.

On the other hand, in \cite{B14} it is shown that when the H\"older exponent of a L\'evy process is less than
$1/(2\be_\nu)$ at some point $t$, or equivalently $\de_t> 2$, the time $t$ can not be an oscillating singularity in
the sense that its primitive must have H\"older exponent $1+ 1/(\de_t\be_\nu)$ at time $t$. It is tempting to expect
that the drift term here has H\"older exponent at most $1+ \e/(\de_t\be_\nu)$ as long as $\de_t>2$, for instance,
equal to $3$, and $r(s)\asymp z(s)^\e$ occurs for infinitely many $s$ tending to $t$. If such a $t$ exists, we get
$1+ \e/(3\be_\nu)< 1/(3\be_\nu)$ for $\e$  sufficiently small. This implies $H_M(t)\le 1+ \e/(3\be_\nu)$, and changes
the singularity sets $E_M(h)$ for $h\le 1+ \e/(3\be_\nu)$.

\bigskip
\noindent
\textbf{Acknowledgments: } JL thanks IHES, Bures-sur-Yvette, for a visiting fellowship, where part of this
paper has been written.  XY wishes to thank Prof. Yimin Xiao for stimulating discussions.

\end{document}